\newtheorem{theorem}{Theorem}[section]
\newtheorem{corollary}[theorem]{Corollary}
\newtheorem{lemma}[theorem]{Lemma}
\newtheorem*{definition*}{Definition}
\begin{document}
\title{On a theorem of Hegyv\'{a}ri and Hennecart}
\author{Dao Nguyen Van Anh \thanks{The Olympia Schools Hanoi. Email: dao anh.dnv@theolympiaschools.edu.vn}\and Le Quang Ham\thanks{Department of Mathematics, Hanoi University of Science. Email: hamlaoshi@gmail.com}\and Doowon Koh \thanks{Department of Mathematics, Chungbuk National University. Email: koh131@gmail.com}\and Thang Pham \thanks{Department of Mathematics, University of Rochester New York. Email: vpham2@math.rochester.edu}\and Le Anh Vinh \thanks{Vietnam Institute of Educational Sciences. Email: vinhle@vnies.edu.vn}}
\date{}
\maketitle  

\begin{abstract}
In this paper, we study growth rate of product of sets in the Heisenberg group over finite fields and the complex numbers. More precisely, we will give improvements and extensions of recent results due to Hegyv\'{a}ri and Hennecart (2018). 
\end{abstract}
\section{Introduction}
Let $\mathbb{F}_q$ be an arbitrary finite field, where $q$ is a prime power. Let $\mathbb{F}_p$ be the prime field of order $p$. For an integer $n\ge 1$, the Heisenberg group of degree $n$, denoted by 
$H_n(\mathbb{F}_q),$ is defined by a set of the following matrices:
\[[\mathbf{x}, \mathbf{y}, z] :=  \begin{bmatrix} 1 & \mathbf{x} & z \\
\mathbf{0} & I_n & \mathbf{y^t} \\
0 & \mathbf{0} & 1
\end{bmatrix}
\]
where $\mathbf{x}, \mathbf{y} \in \mathbb{F}_q^n$, $z \in \mathbb{F}_q$, $\mathbf{y^t}$ denotes the column vector of $\mathbf{y}$, and $I_n$ is the $n \times n$ identity matrix. 
For $ A\subset \mathbb{F}_q, E, F\subset \mathbb{F}_q^n$, we define
\[
[E, F, A]:=\{[\mathbf{x}, \mathbf{y}, z]\colon \mathbf{x}\in E, \mathbf{y}\in F,\ z\in A\},
\]
and 
\[
[E, F, A][E, F, A]:=\{[\mathbf{x}, \mathbf{y}, z]\cdot [\mathbf{x'}, \mathbf{y'}, z']\colon [\mathbf{x}, \mathbf{y}, z], [\mathbf{x'}, \mathbf{y'}, z']\in [E, F, A]\},
\]
Over recent years, there is an intensive study on growth rate in the Heisenberg group over finite fields and applications. In \cite{HHH}, Hegyv\'ari and Hennecart proved a structure result for \textit{bricks} in Heisenberg groups. The precise statement is as follows.  
	\begin{theorem}[Hegyv\'ari-Hennecart, \cite{HHH}]\label{hh}
		For every $\varepsilon>0,$ there exists a positive integer $n_0(\epsilon)$ such that for all $n\ge n_0 (\epsilon)$ and any sets  $X_i,Y_i, Z\subset \mathbb{F}_p$, $i\in [n]$,  
		$X = \prod_{i=1}^n X_i \subset \mathbb{F}^n_p$, $Y = \prod_{i=1}^n Y_i \subset \mathbb{F}^n_p$ if we form
		\[[X, Y, Z] = \{ [\mathbf{x}, \mathbf{y}, z] ~:~ \mathbf{x}\in X,\, \mathbf{y}\in Y,\, z\in Z \} \subset H_n(\mathbb{F}_p)\] 
		with 
		\begin{equation}\label{f:HH_bricks_Hn}
		|[X, Y, Z]| > |H_n(\mathbb{F}_p)|^{3/4+\epsilon} \,,
		\end{equation}
		then $[X, Y, X][X, Y, Z]$ contains at least $|[X, Y, Z]|/p$ cosets of $[\mathbf{0}, \mathbf{0}, \mathbb{F}_p]$. 
		\label{t:HH_bricks_Hn}
	\end{theorem}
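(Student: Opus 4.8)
The plan is to first make the group law explicit and thereby recast the statement as a covering problem for the central (third) coordinate. A direct block multiplication gives
\[
[\mathbf{x},\mathbf{y},z]\cdot[\mathbf{x}',\mathbf{y}',z']=[\mathbf{x}+\mathbf{x}',\,\mathbf{y}+\mathbf{y}',\,z+z'+\langle\mathbf{x},\mathbf{y}'\rangle],\qquad \langle\mathbf{x},\mathbf{y}'\rangle=\sum_{i=1}^n x_iy_i',
\]
so the cosets of the centre $[\mathbf{0},\mathbf{0},\mathbb{F}_p]$ are indexed by the pair $(\mathbf{a},\mathbf{b})$ of leading coordinates, and such a coset lies inside $[X,Y,Z][X,Y,Z]$ exactly when $N(\mathbf{a},\mathbf{b},c)>0$ for every $c\in\mathbb{F}_p$, where
\[
N(\mathbf{a},\mathbf{b},c)=\#\{(\mathbf{x},\mathbf{x}',\mathbf{y},\mathbf{y}',z,z')\in X^2\times Y^2\times Z^2:\ \mathbf{x}+\mathbf{x}'=\mathbf{a},\ \mathbf{y}+\mathbf{y}'=\mathbf{b},\ z+z'+\langle\mathbf{x},\mathbf{y}'\rangle=c\}.
\]
Call such a pair \emph{good}. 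Since $[X,Y,Z][X,Y,Z]$ meets every coset with $(\mathbf{a},\mathbf{b})\in(X+X)\times(Y+Y)$ and $|X+X|\,|Y+Y|\ge|X|\,|Y|\ge|X|\,|Y|\,|Z|/p$, it suffices to exhibit $|X|\,|Y|\,|Z|/p$ good pairs; moreover we may assume $|Z|\le(p-1)/2$, since otherwise $Z+Z=\mathbb{F}_p$ by Cauchy--Davenport and every coset met is automatically good.

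Next I would detect good pairs through a second moment in the third coordinate. Expanding the constraint $z+z'+\langle\mathbf{x},\mathbf{y}'\rangle=c$ in additive characters $e_p(t)=e^{2\pi i t/p}$ yields
\[
N(\mathbf{a},\mathbf{b},c)=\frac1p\sum_{s\in\mathbb{F}_p}e_p(-sc)\,\widehat{Z}(s)^2\,S(\mathbf{a},\mathbf{b},s),\qquad S(\mathbf{a},\mathbf{b},s)=\sum_{\substack{\mathbf{x}+\mathbf{x}'=\mathbf{a}\\\mathbf{y}+\mathbf{y}'=\mathbf{b}}}e_p\big(s\langle\mathbf{x},\mathbf{y}'\rangle\big),
\]
with $\widehat{Z}(s)=\sum_{z\in Z}e_p(sz)$ and the inner sum over $\mathbf{x},\mathbf{x}'\in X$, $\mathbf{y},\mathbf{y}'\in Y$. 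The $s=0$ term is the positive main term $M(\mathbf{a},\mathbf{b})=\tfrac1p|Z|^2 r_{X+X}(\mathbf{a})r_{Y+Y}(\mathbf{b})$, where $r_{X+X}(\mathbf{a})=\#\{(\mathbf{x},\mathbf{x}')\in X^2:\mathbf{x}+\mathbf{x}'=\mathbf{a}\}$, and the decisive structural fact is that, because $X$ and $Y$ are bricks, $S$ factorises over coordinates as $S(\mathbf{a},\mathbf{b},s)=\prod_{i=1}^n S_i(a_i,b_i,s)$. If the coset $(\mathbf{a},\mathbf{b})$ misses $m(\mathbf{a},\mathbf{b})$ values of $c$, then each missed value forces the $s\neq0$ part to equal $-M(\mathbf{a},\mathbf{b})$; summing the squared error over $c$ and applying Parseval gives
\[
m(\mathbf{a},\mathbf{b})\,M(\mathbf{a},\mathbf{b})^2\ \le\ \frac1p\sum_{s\neq0}|\widehat{Z}(s)|^4\,|S(\mathbf{a},\mathbf{b},s)|^2 .
\]
Thus a pair is good as soon as the right-hand side is strictly smaller than $M(\mathbf{a},\mathbf{b})^2$.

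The crux, and the step I expect to be hardest, is to bound the moment $\sum_{\mathbf{a},\mathbf{b}}|S(\mathbf{a},\mathbf{b},s)|^2=\prod_{i=1}^n\big(\sum_{a_i,b_i}|S_i(a_i,b_i,s)|^2\big)$ for each $s\neq0$. Expanding one factor turns $\sum_{a_i,b_i}|S_i(a_i,b_i,s)|^2$ into a bilinear exponential sum of the phase $e_p\!\big(s(x_1y_1'-x_2y_2')\big)$ over quadruples in $X_i\times Y_i$ tied by two additive constraints; the trivial bound is the additive-energy product $E(X_i)E(Y_i)$, and the whole argument rests on extracting a fixed power-of-$p$ saving over this trivial bound in each coordinate, of the type available for mixed Kloosterman/Gauss-type sums. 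Granting such a per-coordinate gain, one must then verify that the product of the $n$ gains beats the product of the $n$ main-term normalisations: this is precisely where the hypothesis $|X|\,|Y|\,|Z|>|H_n(\mathbb{F}_p)|^{3/4+\epsilon}=p^{(2n+1)(3/4+\epsilon)}$ enters, the $\epsilon$ slack supplying a definite exponential margin per unit of dimension that, for $n\ge n_0(\epsilon)$, overwhelms any bounded loss incurred on a few degenerate coordinates.

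Finally I would convert the moment bound into a count of good pairs by a dyadic popularity argument: restrict to pairs for which $r_{X+X}(\mathbf{a})$ and $r_{Y+Y}(\mathbf{b})$ exceed a threshold chosen according to $|Z|$, so that $M(\mathbf{a},\mathbf{b})$ is large and the displayed inequality forces $m(\mathbf{a},\mathbf{b})=0$ for all but a negligible proportion of them. Since the level sets of $r_{X+X}$ and $r_{Y+Y}$ already carry a constant proportion of $|X|\,|Y|$, and in the remaining range $|Z|\le(p-1)/2$ one has $|X|\,|Y|\,|Z|/p<|X|\,|Y|/2$ (the narrow band $|Z|\asymp p/2$ being supplemented by the Cauchy--Davenport lower bound on $|Z+Z|$), tuning the threshold yields at least $|X|\,|Y|\,|Z|/p$ good pairs, which is the assertion. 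In short, the reduction, the Fourier--Parseval step, and the final counting are routine once the per-coordinate character-sum saving and its clean multiplicative accumulation are established, and it is exactly those two points that constitute the real work.
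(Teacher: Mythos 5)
First, a point of context: the paper does not prove this theorem --- it is quoted as background and attributed to Hegyv\'ari--Hennecart \cite{HHH} --- so your attempt can only be measured against the original argument. Your reduction is sound as far as it goes, and it is the natural (indeed, essentially the original) frame: the group law, the coset criterion $N(\mathbf{a},\mathbf{b},c)>0$ for all $c$, the Cauchy--Davenport reduction to $|Z|\le (p-1)/2$, the Fourier expansion with main term $M(\mathbf{a},\mathbf{b})=\frac{1}{p}|Z|^2 r_{X+X}(\mathbf{a})r_{Y+Y}(\mathbf{b})$, the Parseval inequality $m(\mathbf{a},\mathbf{b})M(\mathbf{a},\mathbf{b})^2\le \frac{1}{p}\sum_{s\neq 0}|\widehat{Z}(s)|^4|S(\mathbf{a},\mathbf{b},s)|^2$, and the factorisation of $S$ over coordinates are all correct.

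The genuine gap is that the entire content of the theorem sits in the step you explicitly concede (``granting such a per-coordinate gain''), and the form in which you assert it is false. There is no fixed power-of-$p$ saving over the trivial bound $E^+(X_i)E^+(Y_i)$ \emph{in each coordinate}: if $X_i$ is a singleton, or more generally whenever $|X_i||Y_i|\le p$, the phase $s(x_1y_1'-x_2y_2')$ can be constant on the constrained quadruples and the trivial bound is attained. Moreover the hypothesis $|X||Y||Z|>p^{(2n+1)(3/4+\epsilon)}$ constrains only the \emph{product} of the sizes, so up to roughly $n/2$ of the coordinates may satisfy $|X_i||Y_i|\le p$ --- a positive proportion, not ``a few degenerate coordinates'' incurring ``bounded loss.'' What is actually available is the elementary bilinear estimate $\bigl|\sum_{x\in U}\sum_{y\in V}e_p(sxy)\bigr|\le \sqrt{p|U||V|}$, nontrivial only when $|U||V|>p$, applied coordinatewise to the slices $X_i\cap(a_i-X_i)$ and $Y_i\cap(b_i-Y_i)$; one must then run a pigeonhole argument showing that the $3/4+\epsilon$ density, after restricting to popular $(\mathbf{a},\mathbf{b})$ so that the slices inherit largeness, forces enough coordinates to be dense enough that the multiplied savings beat both the trivial coordinates and the factor $\sum_{s\neq0}|\widehat{Z}(s)|^4\le pE^+(Z)$ coming from $Z$. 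That bookkeeping is precisely where $n_0(\epsilon)$ arises, and it is entirely absent from your sketch. A secondary, fixable defect: for $|Z|$ slightly below $p/2$ you need up to about $|X||Y|/2$ good pairs, while your popularity step guarantees only of order $|X||Y|/4$, and the parenthetical appeal to Cauchy--Davenport ($|Z+Z|\ge 2|Z|-1$) does not by itself fill whole cosets, so the endgame in that range also requires an actual argument.
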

	It follows from the proof of Theorem \ref{hh} in \cite{HHH} that $\epsilon=O(1/n)$. In a very recent work, Shkredov \cite{sh} improved the relation between $\epsilon$ and $n$ in the following theorem. 
\begin{theorem}[Shkredov, \cite{sh}]
	Let $n\ge 2$ be an even number, and $X_i,Y_i, Z\subset \mathbb{F}_p$, $i\in [n]$,  
	$X = \prod_{i=1}^n X_i \subset \mathbb{F}^n_p$, $Y = \prod_{i=1}^n Y_i \subset \mathbb{F}^n_p$,
	\[[X, Y, Z] = \{ [\mathbf{x}, \mathbf{y}, z] ~:~ \mathbf{x}\in X,\, \mathbf{y}\in Y,\, z\in Z \} \subset H_n(\mathbb{F}_p)\] 
	be sets and $X_i$, $Y_i$ have comparable sizes. Set $\mathcal{X}=\max_{i}|X_i|$ and $\mathcal{Y}=\max_i|Y_i|$. If $|Z| \le \mathcal{X} \mathcal{Y}$, $\mathcal{X} \le |Z| \mathcal{Y}$, $\mathcal{Y} \le |Z| \mathcal{X}$ and   
	\begin{equation}\label{f:bricks_Hn}
	\mathcal{X} \mathcal{Y} \gtrsim p^{3/2} \cdot \left(\frac{\mathcal{X} \mathcal{Y}}{p|Z|^{1/2}} \right)^{2^{-{n/2}}} \,,
	\end{equation}
	then $[X, Y, Z][X, Y, Z]$ contains at least  
	$|[X, Y, Z]|/p$ cosets of 
	$[\mathbf{0},\mathbf{0}, \mathbb{F}_p]$. 
	\label{t:bricks_Hn}
\end{theorem}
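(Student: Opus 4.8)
The plan is to begin by making the group law explicit: a direct block-matrix computation gives
$[\mathbf{x},\mathbf{y},z]\cdot[\mathbf{x'},\mathbf{y'},z']=[\mathbf{x}+\mathbf{x'},\,\mathbf{y}+\mathbf{y'},\,z+z'+\langle\mathbf{x},\mathbf{y'}\rangle]$,
so that the centre is exactly $[\mathbf{0},\mathbf{0},\Fp]$ and a coset of it is labelled by a pair $(\mathbf{a},\mathbf{b})\in\Fp^n\times\Fp^n$. Thus $[X,Y,Z][X,Y,Z]$ contains the full coset indexed by $(\mathbf{a},\mathbf{b})$ precisely when, for every $w\in\Fp$, the equation $z+z'+\langle\mathbf{x},\mathbf{y'}\rangle=w$ is solvable with $\mathbf{x}\in X\cap(\mathbf{a}-X)$, $\mathbf{y'}\in Y\cap(\mathbf{b}-Y)$, $z,z'\in Z$. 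Writing $N(\mathbf{a},\mathbf{b},w)$ for the number of such solutions and noting $|[X,Y,Z]|=|X||Y||Z|$, it suffices to produce at least $|X||Y||Z|/p$ pairs $(\mathbf{a},\mathbf{b})$ for which $N(\mathbf{a},\mathbf{b},\cdot)$ is everywhere positive.

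Next I would pass to Fourier analysis in the central variable. With $\psi$ a fixed nontrivial additive character, $\widehat{Z}(t)=\sum_{z\in Z}\psi(tz)$ and $B_{\mathbf{a},\mathbf{b}}(t)=\sum_{\mathbf{x}\in X\cap(\mathbf{a}-X),\,\mathbf{y'}\in Y\cap(\mathbf{b}-Y)}\psi(t\langle\mathbf{x},\mathbf{y'}\rangle)$, orthogonality gives $\sum_w N(\mathbf{a},\mathbf{b},w)^2=p^{-1}\sum_t|\widehat{Z}(t)|^4|B_{\mathbf{a},\mathbf{b}}(t)|^2$, whose $t=0$ term is $p^{-1}\bigl(r(\mathbf{a})s(\mathbf{b})|Z|^2\bigr)^2$ with $r(\mathbf{a})=|X\cap(\mathbf{a}-X)|$ and $s(\mathbf{b})=|Y\cap(\mathbf{b}-Y)|$. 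Since the mean of $N(\mathbf{a},\mathbf{b},\cdot)$ over $w$ is $r(\mathbf{a})s(\mathbf{b})|Z|^2/p$, the number of uncovered $w$ is at most $p\,(r(\mathbf{a})s(\mathbf{b})|Z|^2)^{-2}\sum_{t\neq0}|\widehat{Z}(t)|^4|B_{\mathbf{a},\mathbf{b}}(t)|^2$, so a pair is good as soon as this is $<1$. Summing the failure bound over $(\mathbf{a},\mathbf{b})$ in the popular range where $r(\mathbf{a}),s(\mathbf{b})$ are of typical order (which carries almost all of the additive mass $\sum_{\mathbf{a}}r(\mathbf{a})=|X|^2$, $\sum_{\mathbf{b}}s(\mathbf{b})=|Y|^2$) reduces the theorem to an upper bound for $\sum_{\mathbf{a},\mathbf{b}}|B_{\mathbf{a},\mathbf{b}}(t)|^2$ for each fixed $t\neq0$.

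The decisive point is that the brick hypothesis makes everything factor over coordinates: $X\cap(\mathbf{a}-X)=\prod_i\bigl(X_i\cap(a_i-X_i)\bigr)$, and since $\langle\mathbf{x},\mathbf{y'}\rangle=\sum_i x_iy'_i$ one gets $B_{\mathbf{a},\mathbf{b}}(t)=\prod_{i=1}^n B_i(t;a_i,b_i)$ with $B_i$ a one-dimensional bilinear character sum, whence $\sum_{\mathbf{a},\mathbf{b}}|B_{\mathbf{a},\mathbf{b}}(t)|^2=\prod_{i=1}^n\bigl(\sum_{a_i,b_i}|B_i(t;a_i,b_i)|^2\bigr)$. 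Each factor is an additive-energy-type sum $\sum_{x,x'\in X_i,\,y,y'\in Y_i}\psi\!\left(t(xy-x'y')\right)r_{X_i}(x-x')r_{Y_i}(y-y')$, which I would estimate by completing the sum and applying a large-sieve (operator-norm) bound for the character matrix $(\psi(txy))$ restricted to the box, obtaining cancellation exactly in the regime $\mathcal{X}\mathcal{Y}\gtrsim p$ forced by the hypothesis. The crude version — bounding each coordinate by its operator norm $\min(\sqrt{\mathcal{X}\mathcal{Y}},\sqrt{p})$ and multiplying — only reproduces the $\varepsilon=O(1/n)$ of Theorem~\ref{hh}; to reach the exponent $2^{-n/2}$ I would instead process the coordinates two at a time (this is where the parity of $n$ enters) and run an iterated Cauchy--Schwarz / tensor-power argument, in which the single unavoidable loss $R=\mathcal{X}\mathcal{Y}/(p|Z|^{1/2})$ is only ever paid after $n/2$ successive square-rootings, i.e. to the power $2^{-n/2}$, while the main term relaxes to the ideal scale $p^{3/2}$.

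Finally I would assemble the pieces: inserting the coordinate bound into the variance inequality and summing over $t\neq0$ through $\sum_t|\widehat{Z}(t)|^4\le\|\widehat{Z}\|_\infty^2\cdot p|Z|$ with $\|\widehat{Z}\|_\infty\le|Z|$, the total number of bad pairs becomes negligible against $|X||Y||Z|/p$ precisely when \eqref{f:bricks_Hn} holds; here the three balance conditions $|Z|\le\mathcal{X}\mathcal{Y}$, $\mathcal{X}\le|Z|\mathcal{Y}$, $\mathcal{Y}\le|Z|\mathcal{X}$, together with the comparability of the $|X_i|,|Y_i|$, are what guarantee that the central mass and the two box-directions are matched so that the main term dominates and no single factor degenerates. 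I expect the genuine obstacle to be the third step: a naive coordinatewise product loses an exponential factor and only yields $O(1/n)$, so the entire improvement rests on organizing the cancellation so that the error is amortized across the $n/2$ coordinate-pairs and enters only through the root $R^{2^{-n/2}}$. Keeping the main term on the ideal scale $p^{3/2}$ while controlling the off-diagonal contribution at every stage of the iteration is the delicate part of the argument.
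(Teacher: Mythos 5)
A preliminary point you could not have known: the paper does not prove Theorem \ref{t:bricks_Hn} at all. It is quoted verbatim from Shkredov \cite{sh} as background for the authors' own results, so there is no in-paper proof to compare your attempt against; the only benchmark is the argument in \cite{sh} itself. Measured against that, your opening is correct and standard: the explicit group law, the criterion that the coset labelled $(\mathbf{a},\mathbf{b})$ is fully covered iff $N(\mathbf{a},\mathbf{b},w)>0$ for every $w$, the second-moment identity $\sum_w N(\mathbf{a},\mathbf{b},w)^2=p^{-1}\sum_t|\widehat{Z}(t)|^4|B_{\mathbf{a},\mathbf{b}}(t)|^2$, and the brick factorization $B_{\mathbf{a},\mathbf{b}}(t)=\prod_{i=1}^n B_i(t;a_i,b_i)$ are all sound, and they are indeed how an argument of this type must open.

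The genuine gap is exactly at the step you yourself flag as decisive, and it is not presentational: it is the entire content of the theorem. Once you have fully decoupled $\sum_{\mathbf{a},\mathbf{b}}|B_{\mathbf{a},\mathbf{b}}(t)|^2=\prod_{i=1}^n\bigl(\sum_{a_i,b_i}|B_i(t;a_i,b_i)|^2\bigr)$, any estimate applied factor by factor can only multiply losses across coordinates: a product of $n$ factors of the form $M_i(1+R')$ is at most $\bigl(\prod_i M_i\bigr)e^{nR'}$, which forces $R'\ll 1/n$ and thus reproduces only the $\epsilon=O(1/n)$ regime of Theorem \ref{hh}, as your own accounting concedes. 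The exponent $2^{-n/2}$ can only arise from a recursion \emph{across} dimensions, roughly $\Sigma_n\le M\cdot\Sigma_{n-2}^{1/2}$, where Cauchy--Schwarz peels off one pair of coordinates (this is where the parity of $n$ enters) and leaves the remaining $(n-2)$-dimensional sum under a square root, iterated $n/2$ times; such a recursion is structurally incompatible with decoupling the coordinates first, and your sketch never states, let alone proves, the one-step inequality, nor verifies that its main term stays at the scale $p^{3/2}$ and that its diagonal contributions remain controlled at every stage. Relatedly, the hypotheses $|Z|\le\mathcal{X}\mathcal{Y}$, $\mathcal{X}\le|Z|\mathcal{Y}$, $\mathcal{Y}\le|Z|\mathcal{X}$ and the comparability of the $|X_i|,|Y_i|$ never enter any displayed inequality in your write-up, whereas in \cite{sh} they are consumed inside the iteration to balance main and error terms at each halving; a complete proof must exhibit where each is used. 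As it stands, you have a correct reduction together with an accurate description of what the missing argument would have to accomplish, but the doubly exponential gain --- the theorem's actual advance over Theorem \ref{hh} --- is asserted rather than proved.
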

Moreover, the work of Shkredov \cite{sh} gives an introduction to
representation theory which is good for products of general sets in
the affine and in the Heisenberg groups.

Throughout this paper, we use $X\ll Y$ if $X\le C Y$ for some constant $C>0$ independent of the parameters related to $X$ and $Y,$ and  write $X\gg Y$ for $Y\ll X.$ The notation $X\sim Y$ means that both $X\ll Y$ and $Y\ll X$ hold. In addition, we use $X\lesssim Y$ to indicate that $X\ll (\log{Y})^{C'} Y$ for some constant $C'>0$.

It is worth noting that there is an interesting application of products of sets in the Heisenberg group to
so--called models of \textit{Freiman isomorphisms}, see \cite{h3}. Moreover,  it has been indicated in \cite[Section 5.3]{tao} that for any set in the Heisenberg group with the doubling constant less than two does not have any good model. 

It is well-known that there is a connection between sum--product phenomenon and growth in the group of affine transformations, for example, see \cite{rr2}. Such a connection has been discovered in the setting of Heisenberg group by Hegyv\'{a}ri and Hennecart \cite{HH}. More precisely, in the case $n=1$,  using sum-product estimates, they proved that if $A\subset \mathbb{F}_p$ with $|A|\ge p^{1/2}$, then 
\begin{equation}\label{manhhon}|[A, A, 0] [A, A, 0]|\gg \min\left\lbrace p^{1/2}|[A, A, 0]|^{5/4}, p^{-1/2}|[A, A, 0]|^2  \right\rbrace.\end{equation}
When the size of $A$ is not too big, they obtained the following. 
\begin{theorem}[Hegyv\'ari-Hennecart, \cite{HH}]\label{eq0901}
Let $A$ be a set in $\mathbb{F}_p$. Suppose that $|A|\le p^{2/3}$, then we have 
\[|[A, A, 0] [A, A, 0]|\gg |[A, A, 0]|^{\frac{7}{4}}.\]
\end{theorem}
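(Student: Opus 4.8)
The plan is to make the group law explicit, reduce the statement to a single joint additive--multiplicative energy bound, and then control that energy by a point--line incidence estimate in $\Fp^2$. A direct computation of the matrix product gives
\[
[a,b,0]\cdot[a',b',0] = [\,a+a',\ b+b',\ ab'\,].
\]
Since the third coordinate of $[A,A,0]$ is fixed, $|[A,A,0]|=|A|^2$, so the claim is equivalent to
\[
\big|[A,A,0][A,A,0]\big| \;=\; \big|\{(a+a',\,b+b',\,ab'):a,a',b,b'\in A\}\big| \;\gg\; |A|^{7/2}.
\]
Write $P$ for this product set and $\phi\colon A^4\to\Fp^3$ for the map $(a,a',b,b')\mapsto(a+a',b+b',ab')$.

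Next I would lower bound $|P|$ by Cauchy--Schwarz applied to the preimage counts of $\phi$. Since $\phi$ has $|A|^4$ inputs, $(|A|^4)^2\le |P|\cdot N$, where $N$ is the number of coincidences $\phi(a_1,a_1',b_1,b_1')=\phi(a_2,a_2',b_2,b_2')$, that is
\[
N := \#\{(a_1,a_1',b_1,b_1',a_2,a_2',b_2,b_2')\in A^8 : a_1+a_1'=a_2+a_2',\ b_1+b_1'=b_2+b_2',\ a_1b_1'=a_2b_2'\}.
\]
Writing $A_s:=A\cap(s-A)$ and $r_{A-A}(d)=\#\{(x,y)\in A^2:x-y=d\}$, summing out the variables tied only to the two sum constraints exhibits $N$ as a genuine sum--product quantity,
\[
N \;=\; \sum_{s_1,s_2} E^{\times}(A_{s_1},A_{s_2}) \;=\; \sum_{\substack{a_1,a_2,b_1',b_2'\in A\\ a_1b_1'=a_2b_2'}} r_{A-A}(a_1-a_2)\,r_{A-A}(b_2'-b_1').
\]
Thus it suffices to prove $N\ll|A|^{9/2}$; since $\phi$ is determined by five of its eight coordinates the trivial bound is only $N\ll|A|^5$, so I must save a factor of $|A|^{1/2}$.

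The heart of the matter is that this $|A|^{1/2}$ saving is a true additive--multiplicative interaction and cannot come from bounding either weight alone: a geometric progression makes $E^{\times}(A)\sim|A|^3$, while an arithmetic progression makes the differences $r_{A-A}$ large, and only their joint behaviour is small. After discarding the degenerate contribution ($a_1=a_2$ or $b_1'=b_2'$, easily $\ll|A|^4$), the substitution $\alpha=a_1-a_2$, $\beta=b_2'-b_1'$ turns the relation $a_1b_1'=a_2b_2'$ into the bilinear relation $\alpha b_1'=\beta a_2$, i.e. an incidence between the point $(a_2,b_1')\in A\times A$ and the line through the origin of direction $(\alpha,\beta)$. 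I would then decompose the additive weights $r_{A-A}(\cdot)$ dyadically and package the whole of $N$ as a single point--line incidence count for the Cartesian point set $A\times A$ against lines through the origin. Applying a Szemer\'{e}di--Trotter / Stevens--de Zeeuw type point--line incidence bound in $\Fp^2$, which is available precisely because $|A\times A|=|A|^2\le p^{4/3}$ (equivalently $|A|\le p^{2/3}$), yields $N\ll|A|^{9/2}$ and hence $|P|\gg|A|^8/N\gg|A|^{7/2}$.

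I expect the main obstacle to be exactly this mixing of additive and multiplicative data: the two weights must be treated simultaneously, so the technical work lies in converting the weighted, fibered count $N$ into one clean incidence problem on $A\times A$ (controlling the dyadic pieces and the popular-difference sets) and in checking that the hypotheses of the incidence theorem hold throughout the relevant ranges, which is where the size restriction $|A|\le p^{2/3}$ is essential. That an incidence argument is genuinely needed for small $A$ is clear from the elementary estimate \eqref{manhhon}: for $|A|\le p^{2/3}$ its minimum is $p^{-1/2}|A|^4$, which is smaller than $|A|^{7/2}$ for every $|A|<p$.
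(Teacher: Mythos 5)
Your reduction is set up correctly, and it matches the scheme this paper itself uses for Theorems \ref{thm0}, \ref{thm3} and \ref{thm5} (the paper does not reprove Theorem \ref{eq0901} --- it is quoted from \cite{HH} --- but the proof of Theorem \ref{thm3} is the in-paper template): the group law, $|[A,A,0]|=|A|^2$, the Cauchy--Schwarz step $|A|^8\le |P|\cdot N$, the exact identity $N=\sum_{a_1b_1'=a_2b_2'} r_{A-A}(a_1-a_2)\,r_{A-A}(b_2'-b_1')$, the $O(|A|^4)$ degenerate term, and the target $N\ll |A|^{9/2}$ are all right. The genuine gap is at the decisive step. By folding \emph{both} additive equations into $r_{A-A}$-weights you retain only the direction $(\alpha,\beta)$, so your ``lines'' $\alpha y=\beta x$ form a pencil of concurrent lines through the origin. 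For a concurrent family every point of $A\times A$ lies on at most one line, so $I(P,L)\le |P|+|L|$ trivially, and no incidence theorem (Stevens--de Zeeuw or otherwise) can extract any saving: all the difficulty sits in the weights. Concretely, writing $w(t)=\sum_{\alpha\ne 0}r_{A-A}(\alpha)r_{A-A}(t\alpha)$ and $i(t)=|\{(x,y)\in A\times A:\ y=tx\}|$, your nondegenerate count is $\sum_t w(t)\,i(t)$, and using only the information an incidence argument sees ($\sum_t w(t)\le |A|^4$, $w(t)\le |A|^3$, $i(t)\le |A|$) this sum can be as large as $|A|^5$, the trivial bound. So $N\ll|A|^{9/2}$ does not follow from your plan; beating $|A|^5$ here is itself a sum--product statement.

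The repair is to eliminate \emph{asymmetrically}, as in the paper's proof of Theorem \ref{thm3}: solve for two of the variables, keep the six-tuple count with the membership constraint $b+d-b'\in A$, and read $a'=\frac{d}{b-b'}(c'-c)$ as incidences between the point set $A\times A$ and the multiset of lines $y=\frac{d}{b-b'}(x-c)$. The free translation parameter $c\in A$ spreads the intercepts, so the family is genuinely non-concurrent; with $|L|=E^+(A)|A|$, the self-referential bound $\sum_{l}m(l)^2\le X|A|$ and a dyadic decomposition one gets $X\lesssim |A|^2X^{1/3}E^+(A)^{1/3}$, hence $X\lesssim |A|^3E^+(A)^{1/2}\le |A|^{9/2}$. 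Note two further points. First, that computation needs an incidence bound of Szemer\'edi--Trotter strength $I\ll (|P||L|)^{2/3}$; the paper has this only for multiplicative subgroups (via $T(A)\lesssim|A|^4$ from \cite{mac}) and over $\mathbb{C}$ (Theorem \ref{incidence-C}); the general-set bound of Lemma \ref{frank} has exponent $11/15$ and the same computation then falls short of $|A|^{9/2}$. Indeed, your six-variable quantity is exactly the one the paper bounds over $\mathbb{C}$ in Lemma \ref{thmx-x9}, where the saving comes from collinear triples and multiplicative energy, not from incidences with a pencil. Second, your explanation of the hypothesis is off: Lemma \ref{frank} only requires $|A|^2\le p^{8/5}$, i.e.\ $|A|\le p^{4/5}$, so the threshold $p^{2/3}$ cannot be there ``to make the incidence theorem applicable''; the natural source of that threshold is a main term of size $|A|^6/p$ (compare the terms $|E|^6/q$ and $|A|^3E^+(A)/q$ in the proofs of Theorems \ref{thm1} and \ref{thm0}), which is dominated by $|A|^{9/2}$ precisely when $|A|\le p^{2/3}$.
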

It is not hard to see that the method in the proof of Theorem \ref{eq0901} can be extended to arbitrary finite fields, and as a consequence, we obtain the following. 
\begin{theorem}[Hegyv\'ari-Hennecart, \cite{HH}]\label{thmcuaHH}
Let $A$ be a set in $\mathbb{F}_q$. Suppose that $|A|\ge q^{2/3}$, then we have 
\[|[A, A, 0][A, A, 0]|\gg q|[A, A, 0]|.\]
\end{theorem}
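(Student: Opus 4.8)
The plan is to reduce the problem to a family of two-dimensional point--line incidence problems and to apply Vinh's incidence bound over $\Fq$. Writing the group law explicitly, $[x,y,z][x',y',z']=[x+x',\,y+y',\,z+z'+xy']$, so that with $P:=[A,A,0][A,A,0]$ one has
\[
P=\{(a+a',\,b+b',\,ab')\colon a,a',b,b'\in A\}\subset\Fq^3 .
\]
Since $|[A,A,0]|=|A|^2$, the goal becomes $|P|\gg q|A|^2$. For $t\in A+A$ set $B_t:=A\cap(t-A)$ and let $\mathcal Q_t:=\{(s,u)\colon (s,t,u)\in P\}$ be the slice over $t$; then $|P|=\sum_{t\in A+A}|\mathcal Q_t|$ and $\sum_{t}|B_t|=|A|^2$. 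First I would record these identities and (harmlessly, since it changes $|A|$ by at most one) assume $0\notin A$, so that every $b'\in B_t$ is nonzero.

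The key step is a lower bound $|\mathcal Q_t|\gg\min\!\left(q|A|,\ |A|^3|B_t|/q\right)$ for each $t$, obtained by an Elekes-type configuration. For $a'\in A$ and $b'\in B_t$ consider the line $\ell_{a',b'}\colon u=b'(s-a')$. Each such line contains the $|A|$ distinct points $(a+a',\,ab')$, $a\in A$, all lying in $\mathcal Q_t$, and since $b'\neq0$ distinct pairs $(a',b')$ give distinct lines. Hence the set $\mathcal L_t$ of these lines satisfies $|\mathcal L_t|=|A||B_t|$ and carries at least $|A|\cdot|A||B_t|$ incidences with $\mathcal Q_t$. Vinh's point--line incidence theorem over $\Fq$ gives $I(\mathcal Q_t,\mathcal L_t)\le |\mathcal Q_t||\mathcal L_t|/q+q^{1/2}(|\mathcal Q_t||\mathcal L_t|)^{1/2}$. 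Comparing the two estimates and distinguishing whether $|\mathcal Q_t|\le q|A|/2$ (in which case the error term dominates and forces $|\mathcal Q_t|\gg|A|^3|B_t|/q$) or not, yields the claimed bound.

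Finally I would sum over $t$. Using $|B_t|\le|A|$ and $\sum_t|B_t|=|A|^2$, split the values of $t$ according to whether $|B_t|\ge q^2/|A|^2$ (``large'') or not (``small''); for small $t$ the minimum equals $|A|^3|B_t|/q$. If at least half of the mass $\sum_t|B_t|$ comes from small $B_t$, then $|P|\ge\frac{|A|^3}{q}\sum_{\text{small}}|B_t|\gg|A|^5/q$. If it comes from large $B_t$, there are $\gg|A|$ such $t$, each contributing $\gg q|A|$, so $|P|\gg q|A|^2$. The hypothesis $|A|\ge q^{2/3}$ enters precisely here: it is equivalent both to $|A|^5/q\ge q|A|^2$ and to $q^2/|A|^2\le|A|$, so in either case one obtains $|P|\gg q|A|^2$.

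The main obstacle I expect is the incidence step: one must ensure the lines are genuinely distinct (handled by $0\notin A$, which excludes $b'=0$), and verify that in the relevant range the error term of Vinh's bound really dominates the main term so that the truncated minimum comes out correctly. Once this is settled, the summation bookkeeping with the cutoff at $q^2/|A|^2$ is routine, and the threshold $q^{2/3}$ that it produces matches the hypothesis exactly.
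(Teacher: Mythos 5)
Your proof is correct, and it takes a genuinely different route from the one the paper relies on. The paper never proves Theorem \ref{thmcuaHH} directly---it cites Hegyv\'ari--Hennecart and remarks that their method for Theorem \ref{eq0901} extends from $\mathbb{F}_p$ to $\mathbb{F}_q$---but its own quantitative version is the theorem proved in Section 2 (which contains Theorem \ref{thm0}, and hence this statement as the case $K=1$): there one bounds from above the quadruple count $S=\#\{m_1m_2=m_3m_4\}$, converts it into an incidence problem between $A\times A$ and a \emph{multiset} of lines whose total multiplicity is $E^+(A)|A|$, applies the Fourier incidence bound of Lemma \ref{fourier}, and concludes via Cauchy--Schwarz, $|[A,A,0][A,A,0]|\ge |A|^8/S$; inserting the trivial bound $E^+(A)\le |A|^3$ gives exactly $\gg\min\{|A|^5/q,\,q|A|^2\}$, the same minimum your final dichotomy produces. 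You instead lower-bound the product set directly: slice along the $\mathbf{y}$-sum $t$, build the Elekes-type lines $u=b'(s-a')$ for $(a',b')\in A\times B_t$, apply Vinh's incidence bound per slice to get $|\mathcal{Q}_t|\gg\min\{q|A|,\,|A|^3|B_t|/q\}$, and sum with the cutoff $|B_t|= q^2/|A|^2$. The underlying tool is the same spectral estimate (your Vinh bound is the set specialization of Lemma \ref{fourier}), but the two frameworks buy different things: the paper's energy formulation isolates $E^+(A)$, which is precisely what yields the improvement of Theorem \ref{thm0} when the additive energy is small and what adapts to $\mathbb{F}_q^n$ in Theorem \ref{thm1}, while your slicing needs only the set version of the incidence bound, dispenses with the Cauchy--Schwarz step, and yields extra structural information: in the large-slice regime it shows the product set meets $\gg |A|$ slices in $\gg q|A|$ points each, closer in spirit to the coset statements of Theorem \ref{hh}. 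Your technical points all check out: with $0\notin A$ the lines $\ell_{a',b'}$ are pairwise distinct (slope $b'$, intercept $-a'b'$), each carries $|A|$ distinct points of $\mathcal{Q}_t$ since the first coordinates $a+a'$ are distinct, the two-term dichotomy in Vinh's bound gives the stated minimum, and $|A|\ge q^{2/3}$ is exactly the condition making both branches of the final case analysis return $\gg q|A|^2=q|[A,A,0]|$.
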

Note that the lower bound in Theorem \ref{thmcuaHH} is stronger than that of (\ref{manhhon}). 

The main purpose of this paper is to give improvements and extensions of Theorems \ref{eq0901} and \ref{thmcuaHH} in the setting of arbitrary finite fields $\mathbb{F}_q$ and the complex numbers $\mathbb{C}$. 

In our first theorem, we will show that Theorem \ref{thmcuaHH} can be improved in the case the additive energy of $A$ is small. 
\begin{theorem}\label{thm0}
Let $A$ be a set in $\mathbb{F}_q$. Let $E^+(A)$ be the number of quadruples $(a, b, c, d)\in A^4$ such that $a+b=c+d$. Suppose that $E^+(A)\le \frac{|A|^3}{K}$ for some $K>0$ and $|A|\ge K^{1/3}q^{2/3}$,
then we have 
\[|[A, A, 0][A, A, 0]|\gg Kq|[A, A, 0]|.\]
\end{theorem}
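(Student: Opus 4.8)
The plan is to reduce the statement to an upper bound on a collision count, and then to a weighted incidence estimate. First I would record the group law: multiplying the defining matrices gives $[a,b,0]\cdot[c,d,0]=[a+c,\,b+d,\,ad]$, so that, writing $\Phi\colon A^4\to\Fq^3$ for $\Phi(a,b,c,d)=(a+c,b+d,ad)$, we have $[A,A,0][A,A,0]=\Phi(A^4)$ and $|[A,A,0]|=|A|^2$. By the Cauchy--Schwarz inequality, $|\Phi(A^4)|\ge |A|^8/\mathcal{E}$, where $\mathcal{E}=\#\{(a,b,c,d),(a',b',c',d')\in A^4:\Phi(a,b,c,d)=\Phi(a',b',c',d')\}$ is the number of collisions. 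Thus it suffices to prove $\mathcal{E}\ll |A|^6/(Kq)$, and the target bound $|[A,A,0][A,A,0]|\gg Kq|A|^2$ follows.

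Next I would simplify $\mathcal{E}$. Writing $r(x)=\#\{(s,s')\in A^2:s-s'=x\}$ for the difference--representation function of $A$, the two additive constraints $a+c=a'+c'$ and $b+d=b'+d'$ can be summed out (the free pairs $(c,c')$ and $(b,b')$ contribute factors $r(a-a')$ and $r(d-d')$), leaving
\[
\mathcal{E}=\sum_{\substack{a,a',d,d'\in A\\ ad=a'd'}} r(a-a')\,r(d-d').
\]
Here the total weight is $\sum_{(a,a')\in A^2}r(a-a')=\sum_x r(x)^2=E^+(A)\le |A|^3/K$; also $E^+(A)\ge |A|^2$ forces $K\le |A|$, and together with $|A|^3\ge Kq^2$ (the hypothesis $|A|\ge K^{1/3}q^{2/3}$) and $|A|\le q$ this yields the inequality $Kq\le |A|^2$, which I would use repeatedly. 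Now expand the multiplicative constraint through the additive characters of $\Fq$, $\mathbf{1}[ad=a'd']=\tfrac1q\sum_{t\in\Fq}e_q(t(ad-a'd'))$ with $e_q(x)=e^{2\pi i\,\mathrm{Tr}(x)/p}$. The term $t=0$ produces the main term $(E^+(A))^2/q\le |A|^6/(K^2q)$, which is already a factor $K$ below the target; it remains to bound the error $\mathcal{E}_{\mathrm{err}}=\tfrac1q\sum_{t\ne0}\sum_{a,a',d,d'\in A}r(a-a')r(d-d')\,e_q(t(ad-a'd'))$ by $\ll |A|^6/(Kq)$.

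The heart of the argument is this last bound. The terms $t\ne 0$ assemble into a bilinear exponential sum in the determinant form $ad-a'd'$, equivalently a weighted point--line incidence count in which the points $A\times A$ carry the additive weights $r$; I would bound it by a Vinh--type spectral incidence estimate, whose error is governed by the spectral gap $q^{1/2}$ of the incidence graph. This produces two pieces: a trivial/diagonal piece of size $O(|A|^4)$, which is $\ll |A|^6/(Kq)$ precisely because $Kq\le |A|^2$, and a genuine spectral piece, which is $\ll |A|^6/(Kq)$ exactly at the threshold $|A|\ge K^{1/3}q^{2/3}$. The main obstacle is controlling this spectral piece for an \emph{arbitrary} set $A$, where no pointwise cancellation in the incomplete character sums $\sum_{a\in A}\chi(a)e_q(\xi a)$ is available, so one must rely on the robust incidence/eigenvalue bound rather than Fourier cancellation; the role of the hypothesis $E^+(A)\le |A|^3/K$ is to cap the total additive weight carried by the points, and this is what inserts the saving factor $K$. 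Combining the main term with the controlled error gives $\mathcal{E}\ll |A|^6/(Kq)$, completing the proof; the finitely many terms in which one of $a,a',d,d'$ vanishes are of lower order and would be treated directly.
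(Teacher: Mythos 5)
Your reduction agrees with the paper's own opening moves: the group law $[a,b,0][c,d,0]=[a+c,b+d,ad]$, the Cauchy--Schwarz step $|[A,A,0][A,A,0]|\ge |A|^8/\mathcal{E}$, the identity $\mathcal{E}=\sum_{ad=a'd'}r(a-a')r(d-d')$, the main term $E^+(A)^2/q$, and the absorption of the diagonal $O(|A|^4)$ via $Kq\le|A|^2$ are all correct and match the paper in substance. The gap is in the error term. A spectral (Vinh-type) bound applied to your symmetric weighted form necessarily pays the $L^2$ masses of the weights, namely $\sum_{a,a'\in A}r(a-a')^2=\sum_x r(x)^3=:E_3^+(A)$ on each side; moreover, for the determinant relation $ad-a'd'=0$, i.e.\ the zero dot-product graph on $\mathbb{F}_q^2$ whose zero set is the pencil of lines through the origin, the relevant nontrivial eigenvalue is of order $q$, not $q^{1/2}$, because of the direction (parallel) classes. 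So your method yields an error of order $q\,E_3^+(A)$, and nothing in the hypotheses controls $E_3^+(A)$: the assumption caps only the second moment $E^+(A)=\sum_x r(x)^2$ (the ``total additive weight,'' as you put it, is an $L^1$ bound, which feeds the main term but not the spectral piece). Concretely, let $A$ be an arithmetic progression of size $\sim q^{2/3}$: then $K\sim 1$, the hypotheses hold, and you need $\mathcal{E}\ll |A|^6/(Kq)\sim q^3$, but $E_3^+(A)\sim|A|^4=q^{8/3}$, so your error term is $\sim q^{11/3}$; even granting the optimistic gap $q^{1/2}$ it is $\sim q^{19/6}\gg q^3$. Thus the assertion that the spectral piece is $\ll |A|^6/(Kq)$ ``exactly at the threshold'' is precisely the unproved step, and as stated it fails for admissible sets.

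The paper circumvents exactly this obstruction with two devices absent from your plan. First, it keeps the count asymmetric: after eliminating $a$ and $d'$ it bounds the residual count $X$ by incidences between the \emph{unweighted} point set $P=A\times A$ (so $\sum_P m^2=|A|^2$; one copy of $A$ remains a coordinate of the points rather than being summed into a weight) and a \emph{multiset} of genuine two-parameter lines $y=\frac{d}{b-b'}(x-c)$ with $b+d-b'\in A$, of total multiplicity $E^+(A)|A|$; for this point--line configuration the Fourier lemma (Lemma \ref{fourier} with the one-dimensional vector part) really does carry the constant $q^{1/2}$. Second, and crucially, it bounds the second moment of the line multiplicities self-referentially, $\sum_{l}m(l)^2\le X|A|$, so the lemma gives $X\le \frac{|A|^3E^+(A)}{q}+q^{1/2}X^{1/2}|A|^{3/2}$, and solving this quadratic inequality in $X$ yields $X\ll \frac{|A|^3E^+(A)}{q}+q|A|^3$, where $q|A|^3\le \frac{|A|^6}{Kq}$ is exactly the hypothesis $|A|^3\ge Kq^2$. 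This bootstrap is what replaces the uncontrollable $E_3^+(A)$ by $q|A|^3$; without it, or without the asymmetric formulation, your argument cannot reach the stated bound.
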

Our next theorem is an extension of Theorem \ref{thmcuaHH} in the setting of $H_n(\mathbb{F}_q)$ for any $n\ge 1$.
\begin{theorem}\label{thm1}
Let $E$ be a set in $\mathbb{F}_q^n$. Suppose that $|E|\gg q^{\frac{n}{2}+\frac{1}{4}}$, then we have 
\[|[E, E, 0][E, E, 0]|\gg q|[E, E, 0]|.\]
\end{theorem}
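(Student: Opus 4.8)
The plan is to convert the statement into an energy (collision) estimate and then split off a main term, the remaining error being governed by a square–root cancellation bound for a bilinear character sum. First I would record the group law: multiplying the defining matrices gives
\[
[\mathbf{x},\mathbf{y},0]\cdot[\mathbf{x}',\mathbf{y}',0]=[\mathbf{x}+\mathbf{x}',\ \mathbf{y}+\mathbf{y}',\ \mathbf{x}\cdot\mathbf{y}'],\qquad \mathbf{x}\cdot\mathbf{y}'=\textstyle\sum_i x_iy_i'.
\]
Hence $|[E,E,0]|=|E|^2$, and $[E,E,0][E,E,0]$ is exactly the set of triples $[\mathbf{x}+\mathbf{x}',\mathbf{y}+\mathbf{y}',\mathbf{x}\cdot\mathbf{y}']$ with $\mathbf{x},\mathbf{x}',\mathbf{y},\mathbf{y}'\in E$. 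Writing $N:=|[E,E,0][E,E,0]|$ and letting $\Phi\colon E^4\to H_n(\Fq)$ be this parametrization, Cauchy--Schwarz yields $N\ge |E|^8/\mathcal{E}$, where $\mathcal{E}$ counts the collisions $\Phi(\omega_1)=\Phi(\omega_2)$; equivalently $\mathcal{E}$ is the number of octuples in $E^8$ with $\mathbf{x}_1+\mathbf{x}_1'=\mathbf{x}_2+\mathbf{x}_2'$, $\mathbf{y}_1+\mathbf{y}_1'=\mathbf{y}_2+\mathbf{y}_2'$ and $\mathbf{x}_1\cdot\mathbf{y}_1'=\mathbf{x}_2\cdot\mathbf{y}_2'$. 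Thus it suffices to prove $\mathcal{E}\ll |E|^6/q$, as this gives $N\gg q|E|^2$.

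Next I would compute $\mathcal{E}$ by Fourier inversion applied to all three defining equations. Fixing a nontrivial additive character $\chi$ of $\Fq$ and writing $\widehat{E}(\mathbf{u})=\sum_{\mathbf{w}\in E}\chi(\mathbf{u}\cdot\mathbf{w})$, one obtains
\[
\mathcal{E}=\frac{1}{q^{2n+1}}\sum_{\mathbf{u},\mathbf{v}\in\Fq^n}\ \sum_{s\in\Fq}|\widehat{E}(\mathbf{u})|^2\,|\widehat{E}(\mathbf{v})|^2\,|T(\mathbf{u},\mathbf{v},s)|^2,\qquad T(\mathbf{u},\mathbf{v},s)=\sum_{\mathbf{x},\mathbf{y}\in E}\chi(s\,\mathbf{x}\cdot\mathbf{y}+\mathbf{u}\cdot\mathbf{x}+\mathbf{v}\cdot\mathbf{y}).
\]
The contribution of $s=0$ is exactly $E^{+}(E)^2/q\le |E|^6/q$, using $\sum_{\mathbf{u}}|\widehat{E}(\mathbf{u})|^4=q^nE^{+}(E)$ and $E^{+}(E)\le|E|^3$; this is the main term. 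Returning the $s\ne0$ part to physical space, it equals $\sum_{\mathbf{h},\mathbf{k}}|E_\mathbf{h}||E_\mathbf{k}|\big(W(\mathbf{h},\mathbf{k})-|E_\mathbf{h}||E_\mathbf{k}|/q\big)$, where $E_\mathbf{h}=E\cap(E+\mathbf{h})$, so $|E_\mathbf{h}|=r_{E-E}(\mathbf{h})$, and $W(\mathbf{h},\mathbf{k})$ is the number of $(\mathbf{p},\mathbf{q})\in E_\mathbf{h}\times E_\mathbf{k}$ on the affine hyperplane $\mathbf{k}\cdot\mathbf{p}+\mathbf{h}\cdot\mathbf{q}=\mathbf{h}\cdot\mathbf{k}$ of $\Fq^{2n}$.

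I would then treat the degenerate ranges separately. The term $\mathbf{h}=\mathbf{k}=\mathbf{0}$ contributes $\approx|E|^4\ll|E|^6/q$ (as $|E|^2\gg q$), and the terms with exactly one of $\mathbf{h},\mathbf{k}$ zero reduce to a one–variable count $\#\{\mathbf{p}\in E:\mathbf{p}\cdot\mathbf{w}=0\}$ weighted by $r_{E-E}(\mathbf{w})^2$; splitting off its mean $|E|/q$ and estimating the oscillatory remainder by Parseval shows these stay below $|E|^6/q$ once $|E|\gg q^{(n+1)/3}$, which is weaker than the hypothesis. The decisive range is the fully non-degenerate one $\mathbf{h},\mathbf{k}\ne\mathbf{0}$: there the hyperplane is genuine and
\[
W(\mathbf{h},\mathbf{k})-\frac{|E_\mathbf{h}||E_\mathbf{k}|}{q}=\frac1q\sum_{r\ne0}\chi(-r\,\mathbf{h}\cdot\mathbf{k})\,\widehat{E_\mathbf{h}}(r\mathbf{k})\,\widehat{E_\mathbf{k}}(r\mathbf{h}).
\]

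The heart of the proof is then to bound $\tfrac1q\sum_{\mathbf{h},\mathbf{k}\ne\mathbf{0}}\sum_{r\ne0}|E_\mathbf{h}||E_\mathbf{k}|\,\chi(-r\,\mathbf{h}\cdot\mathbf{k})\widehat{E_\mathbf{h}}(r\mathbf{k})\widehat{E_\mathbf{k}}(r\mathbf{h})$ by $\ll|E|^6/q$. A crude estimate — triangle inequality, Cauchy--Schwarz decoupling $\mathbf{h}$ from $\mathbf{k}$, and Parseval $\sum_{\mathbf{m}}|\widehat{E_\mathbf{h}}(\mathbf{m})|^2=q^n|E_\mathbf{h}|$ — gives only $q^n|E|^4$, which forces the weaker threshold $|E|\gg q^{(n+1)/2}$ (this is exactly the barrier coming from near-isotropic subspaces, which at the true threshold $|E|=q^{n/2+1/4}$ lie just above the maximal isotropic dimension $\lfloor n/2\rfloor$). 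To reach $q^{n/2+1/4}$ one must not discard the oscillation of $\chi(-r\,\mathbf{h}\cdot\mathbf{k})$ and of the coupled factors $\widehat{E_\mathbf{h}}(r\mathbf{k})\widehat{E_\mathbf{k}}(r\mathbf{h})$, but instead exploit square–root cancellation in the complete bilinear sum over $(\mathbf{h},\mathbf{k},r)$ — a Weil-type estimate, the arithmetic incarnation of the spectral gap of size $q^{(2n-1)/2}$ of the dot–product graph on $\Fq^{2n}$. This saves the crucial factor $q^{1/2}$, improving $q^n|E|^4$ to $q^{\,n-1/2}|E|^4$, which is $\ll|E|^6/q$ precisely when $|E|^4\gg q^{2n+1}$, i.e. $|E|\gg q^{n/2+1/4}$. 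I expect establishing this square–root cancellation — showing the bilinear phase remains sufficiently nondegenerate after incorporating the set constraints, so that the sum is genuinely smaller than its triangle–inequality bound — to be the main obstacle; the degenerate terms and the main term are routine bookkeeping.
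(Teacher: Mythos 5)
Your reduction to the energy $\mathcal{E}$ via Cauchy--Schwarz, the Fourier expansion, and the extraction of the main term $E^{+}(E)^2/q\le |E|^6/q$ all agree with the paper, and your target error bound $q^{n-1/2}|E|^4$ is of exactly the right strength (it is what the paper's bound $q^{n-1}|E|^4+q^{2n}|E|^2$ gives at the threshold $|E|\sim q^{n/2+1/4}$, and the subspace example $E=\mathbb{F}_p^n$, $q=p^2$ shows it cannot be improved). But there is a genuine gap at precisely the point you flag: the claimed square-root cancellation in
\[
\frac{1}{q}\sum_{\mathbf{h},\mathbf{k}\ne\mathbf{0}}\sum_{r\ne 0}|E_\mathbf{h}||E_\mathbf{k}|\,\chi(-r\,\mathbf{h}\cdot\mathbf{k})\,\widehat{E_\mathbf{h}}(r\mathbf{k})\,\widehat{E_\mathbf{k}}(r\mathbf{h})
\]
is asserted, not proved, and the tool you propose does not apply. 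This is not a complete algebraic character sum: the factors $\widehat{E_\mathbf{h}}(r\mathbf{k})$, $\widehat{E_\mathbf{k}}(r\mathbf{h})$ and the weights $|E_\mathbf{h}|$, $|E_\mathbf{k}|$ depend on the arbitrary set $E$, so no Weil-type bound is available, while any attempt to decouple the variables by Cauchy--Schwarz and Parseval collapses back to the ``crude estimate'' you already rejected. As written, the main analytic step of the proof is missing.

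The paper avoids the need for any such cancellation by restructuring the count \emph{before} invoking a spectral bound. After the same reduction to $\mathbf{d}\cdot(\mathbf{c'}-\mathbf{c})=\mathbf{a'}\cdot(\mathbf{b}-\mathbf{b'})$, it encodes the two sides as multisets $\mathcal{A}=\{(\mathbf{d},-\mathbf{b},\mathbf{d}\cdot\mathbf{c})\}$ and $\mathcal{B}=\{(\mathbf{c'},\mathbf{a'},-\mathbf{a'}\cdot\mathbf{b'})\}$ in $\mathbb{F}_q^{2n}\times\mathbb{F}_q$, so that $S$ (up to the main term) is the number of solutions of $\mathbf{a}\cdot\mathbf{c}=b+d$ counted with multiplicity; Lemma \ref{fourier} then gives the error $q^{n}\left(\sum_{\mathbf{x}}m_{\mathcal{A}}(\mathbf{x})^2\sum_{\mathbf{y}}m_{\mathcal{B}}(\mathbf{y})^2\right)^{1/2}$, i.e.\ the oscillation is already packaged in a lemma whose error depends only on the $L^2$-norms of the multiplicities. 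The decisive saving --- the one you hoped to extract from the phase $\chi(-r\,\mathbf{h}\cdot\mathbf{k})$ --- is instead won deterministically from a second-moment bound: $\sum_{\mathbf{x}}m_{\mathcal{A}}(\mathbf{x})^2=|E|\,T$, where $T$ counts triples $(\mathbf{v},\mathbf{x},\mathbf{x'})\in E^3$ with $\mathbf{v}\cdot(\mathbf{x}-\mathbf{x'})=0$, and Lemma \ref{lm2} (an elementary orthogonality computation, no deep cancellation) yields $T\le |E|^3/q+q^n|E|$. Combining gives $S\le |E|^6/q+q^{n-1}|E|^4+q^{2n}|E|^2\ll |E|^6/q$ once $|E|\gg q^{\frac{2n+1}{4}}$. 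So your bookkeeping is sound and the quantitative goal is correctly identified, but the mechanism you would need is the multiplicity estimate, not cancellation in a bilinear sum over $(\mathbf{h},\mathbf{k},r)$; without Lemma \ref{lm2} (or an equivalent), the proposal does not close.
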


Notice that in general the conclusion of Theorem \ref{thm1} is sharp, since $E$ can be a subspace in $\mathbb{F}_q^n$, which implies that $[E, E, 0][E, E, 0]\subset [E, E, \mathbb{F}_q]$. Moreover, the exponent $\frac{n}{2}+\frac{1}{4}$ can not be decreased to $\frac{n}{2}$, since, suppose that $q=p^2$, then one can take $E=\mathbb{F}_p^n$, which gives us  $|[E, E, 0][E, E, 0]|\ll p|[E, E, 0]|=q^{1/2}|[E, E, 0]|.$

In the setting of prime fields, if $E$ is a set in the plane $\mathbb{F}_p^2$ and the size of $E$ is not too big, then we have the following theorem in $H_2(\mathbb{F}_p)$.
\bigskip
\begin{theorem}\label{thm2}
Let $\mathbb{F}_p$ be a prime field with $p\equiv 3\mod 4$, and $E$ be a set in $\mathbb{F}_p^2$ with $|E|\ll p^{8/5}$. Then
\[|[E, E, 0][E, E, 0]|\gg |[E, E, 0]|^{\frac{19}{15}}.\]
\end{theorem}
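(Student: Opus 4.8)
The plan is to pass to the explicit group law, reduce the problem to a fourth-moment (energy) estimate by Cauchy--Schwarz, and then control that energy through a combination of the additive energy of $E$ and a point--line incidence bound over $\Fp$. Recall first that in $H_2(\Fp)$ the group law gives
\[
[\mathbf{x},\mathbf{y},0]\cdot[\mathbf{x'},\mathbf{y'},0]=[\mathbf{x}+\mathbf{x'},\ \mathbf{y}+\mathbf{y'},\ \langle \mathbf{x},\mathbf{y'}\rangle],
\]
so that $[E,E,0][E,E,0]$ is exactly the set of triples $(\mathbf{x}+\mathbf{x'},\mathbf{y}+\mathbf{y'},\langle\mathbf{x},\mathbf{y'}\rangle)$ with $\mathbf{x},\mathbf{x'},\mathbf{y},\mathbf{y'}\in E$. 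Since $|[E,E,0]|=|E|^2$, the claimed inequality is equivalent to $N:=|[E,E,0][E,E,0]|\gg |E|^{38/15}$. Writing $r(\mathbf{u},\mathbf{v},w)$ for the number of quadruples $(\mathbf{x},\mathbf{x'},\mathbf{y},\mathbf{y'})\in E^4$ producing a given triple, one has $\sum r=|E|^4$, and Cauchy--Schwarz yields $N\ge |E|^8/\Sigma$ with $\Sigma=\sum r^2$. Thus it suffices to prove $\Sigma\lesssim |E|^{82/15}$.

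The next step is to disentangle $\Sigma$. Expanding $\Sigma$ as a count of octuples and summing over the common values $\mathbf{u}=\mathbf{x}+\mathbf{x'}=\mathbf{a}+\mathbf{a'}$ and $\mathbf{v}=\mathbf{y}+\mathbf{y'}=\mathbf{b}+\mathbf{b'}$, the additive constraints force $\mathbf{x},\mathbf{a}\in E_{\mathbf{u}}:=E\cap(\mathbf{u}-E)$ and $\mathbf{y'},\mathbf{b'}\in E_{\mathbf{v}}$, so that
\[
\Sigma=\sum_{\mathbf{u},\mathbf{v}} T(E_{\mathbf{u}},E_{\mathbf{v}}),\qquad T(S,T):=\#\{(s,s',t,t')\in S^2\times T^2:\langle s,t\rangle=\langle s',t'\rangle\},
\]
the \emph{dot-product energy} of the restricted sets. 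The identity $\sum_{\mathbf{u}}|E_{\mathbf{u}}|^2=E^+(E)$ ties everything back to the additive energy of $E$. For $T(S,T)$ I would prove a bound of the shape $T(S,T)\ll |S|^2|T|^2/p+\mathrm{Inc}(S,T)$, the first term being the equidistributed main contribution. Summed over $\mathbf{u},\mathbf{v}$ the main term contributes $E^+(E)^2/p$, and since $E^+(E)\le|E|^3$ this is $\lesssim|E|^{82/15}$ as soon as $|E|\lesssim p^{15/8}$, which is guaranteed by $|E|\ll p^{8/5}$.

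The substance lies in the concentration term $\mathrm{Inc}(S,T)$. For $c\neq 0$ the equation $\langle s,t\rangle=c$ describes, for each $s\neq\mathbf{0}$, an affine line in the variable $t$, so $T(S,T)$ decomposes into point--line incidences between $T$ and the family $\{\,t:\langle s,t\rangle=c\,\}$. Here the hypothesis $p\equiv 3\pmod 4$ is used essentially: $-1$ is a non-square, the form $\langle\cdot,\cdot\rangle$ is anisotropic, and there are no self-orthogonal directions, which keeps the degenerate value $c=0$ and the coincident-line configurations under control and prevents a single line from absorbing too many points. Feeding these families into a point--line incidence bound over $\Fp$, decomposing dyadically according to the sizes $|E_{\mathbf{u}}|$, and recombining through $\sum_{\mathbf{u}}|E_{\mathbf{u}}|^2=E^+(E)\le|E|^3$, one optimizes the exponents; the threshold $|E|\ll p^{8/5}$ is precisely what keeps the incidence estimate in its nontrivial range and balances the two contributions, giving $\Sigma\lesssim|E|^{82/15}$ and hence the theorem.

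The main obstacle will be the uniform control of the incidence term across all the restricted sets $E_{\mathbf{u}},E_{\mathbf{v}}$, whose sizes range over many scales: one must run the incidence bound simultaneously at every relevant scale, verify that each stays within its admissible range (this is exactly where $p\equiv 3\pmod 4$ and $|E|\ll p^{8/5}$ are indispensable), and then recombine through the additive-energy identity so that the exponents collapse to $82/15$. Obtaining the sharp constant $19/15$ — rather than a weaker power — hinges on using the incidence input with the best available exponents and carrying out the dyadic optimization cleanly, since any slack in the incidence bound propagates directly into a loss in the final exponent.
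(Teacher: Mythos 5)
Your opening reductions are correct: $N:=|[E,E,0][E,E,0]|\ge |E|^8/\Sigma$ by Cauchy--Schwarz, and $\Sigma=\sum_{\mathbf{u},\mathbf{v}}T(E_{\mathbf{u}},E_{\mathbf{v}})$ with $\sum_{\mathbf{u}}|E_{\mathbf{u}}|^2=E^+(E)$. Indeed your target $82/15$ is exactly what the non-degenerate computation yields: $T(S,T)\le \max_c r(c)\cdot |S||T|$ with $r(c)\ll(|S||T|)^{11/15}$ from Lemma \ref{frank}, then H\"older gives $\sum_{\mathbf{u}}|E_{\mathbf{u}}|^{26/15}\le (|E|^2)^{4/15}E^+(E)^{11/15}\le |E|^{41/15}$ and $\Sigma\lesssim |E|^{82/15}$. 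But the step you leave unverified is false, because the degenerate value $c=0$ is \emph{not} controlled by any incidence bound. Take $E=\{(i,0):1\le i\le m\}\cup\{(0,j):1\le j\le m\}$ with $m\le p/2$, so $|E|=2m\ll p^{8/5}$. For $\sim m$ sums $\mathbf{u}=(u_1,0)$ the fibre $E_{\mathbf{u}}$ contains $\sim m$ horizontal points, and symmetrically for $\sim m$ fibres $E_{\mathbf{v}}$ with vertical points; every choice of $\mathbf{x},\mathbf{a}$ horizontal and $\mathbf{y}',\mathbf{b}'$ vertical satisfies $\mathbf{x}\cdot\mathbf{y}'=0=\mathbf{a}\cdot\mathbf{b}'$, so $T(E_{\mathbf{u}},E_{\mathbf{v}})\gg |E_{\mathbf{u}}|^2|E_{\mathbf{v}}|^2$ for all these pairs and $\Sigma\gg m^6\gg |E|^6\gg |E|^{82/15}$. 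Cauchy--Schwarz then returns only the trivial $N\gg|E|^2$. Your appeal to $p\equiv 3\bmod 4$ does not repair this: anisotropy of the form forbids a line with $\ell=\ell^{\perp}$, but it does not forbid $S\subset\ell$ and $T\subset\ell^{\perp}$ with $\ell\ne\ell^{\perp}$, and in that configuration the lines $\{t:\langle s,t\rangle=0\}$ coincide for all $s\in S$, so $r(0)=|S||T|$ (analogous collapses occur for $c\ne 0$ with perpendicular translates). The extreme non-uniformity of the representation function on the central slice $w=0$ is a structural obstruction to any energy/Cauchy--Schwarz proof of this theorem, not a matter of running the dyadic optimization more carefully.

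The paper's proof avoids energy entirely, which is presumably why it switches methods for this one theorem. Writing products as $[\mathbf{x},\mathbf{y},0][\mathbf{x}',\mathbf{y}',0]=[\mathbf{x}+\mathbf{x}',\mathbf{y}+\mathbf{y}',\mathbf{x}\cdot\mathbf{y}']$, fix for each $c\in\Pi(E):=\{\mathbf{a}\cdot\mathbf{b}:\mathbf{a},\mathbf{b}\in E\}$ one pair with $\mathbf{x}\cdot\mathbf{y}'=c$ and let $\mathbf{x}',\mathbf{y}$ range over $E$: this produces $|E|^2$ distinct elements in each of $|\Pi(E)|$ pairwise disjoint central slices, hence $N\ge |\Pi(E)||E|^2$. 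The theorem thus reduces to the sum--product-type estimate $|\Pi(E)|\gg|E|^{8/15}$ for $|E|\ll p^{8/5}$ (Lemma \ref{lm3}), proved by a pencil argument combined with Lemma \ref{frank}; that is the only place $p\equiv 3\bmod 4$ enters, guaranteeing there are no isotropic lines so that the line families $a_1x+a_2y=r$ attached to points in distinct directions through the origin are pairwise disjoint. Note this lower bound is immune to the counterexample above (there $\Pi(E)$ contains the large product set $\{ii'\}$). If you wish to salvage an energy argument, you would first have to pass to a large subset of $E$ free of heavy perpendicular line pairs, which amounts to a genuinely different proof.
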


When $A$ is a multiplicative subgroup of $\mathbb{F}_p^*$, we are able to show that the exponent $\frac{7}{4}$ in  Theorem \ref{eq0901} can be improved significantly. 
\begin{theorem}\label{thm3}
Let $A$ be a multiplicative subgroup of $\mathbb{F}_p^*$ with $|A|\le p^{1/2}\log (p)$. We have 
\[|[A, A, 0][A, A, 0]|\gtrsim |[A, A, 0]|^{\frac{151}{80}}.\]
\end{theorem}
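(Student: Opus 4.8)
The plan is to bound $|[A,A,0][A,A,0]|$ from below by a Cauchy--Schwarz argument on representation functions, reduce the resulting collision count to a single additive--energy quantity of $A$ using the multiplicative structure, and then feed in a sharp energy bound for multiplicative subgroups. First I would record that distinct pairs $(x,y)\in A\times A$ give distinct matrices, so $|[A,A,0]|=|A|^2$, and that the group law gives $[x,y,0]\cdot[x',y',0]=[x+x',\,y+y',\,xy']$; hence $[A,A,0][A,A,0]$ is precisely the set of triples $(x+x',y+y',xy')$ with $x,x',y,y'\in A$. Writing $N:=|[A,A,0][A,A,0]|$ and letting $r(s,t,w)$ count the $(x,x',y,y')\in A^4$ realising a given triple, the identity $\sum_{s,t,w}r(s,t,w)=|A|^4$ together with Cauchy--Schwarz yields
\[
N\ \ge\ \frac{|A|^8}{T},\qquad T:=\sum_{s,t,w}r(s,t,w)^2 .
\]

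The heart of the argument is the evaluation of $T$, which counts octuples $(x_1,x_1',y_1,y_1',x_2,x_2',y_2,y_2')\in A^8$ with $x_1+x_1'=x_2+x_2'$, $y_1+y_1'=y_2+y_2'$ and $x_1y_1'=x_2y_2'$. I would set $u=x_1-x_2$ and $v=y_1-y_2$, so the additive equations give $x_2=x_1-u,\ x_2'=x_1'+u,\ y_2=y_1-v,\ y_2'=y_1'+v$, and the product equation collapses to $vx_1-uy_1'=uv$. If $u=0$ then $v=0$ (since $A\subset\mathbb{F}_p^{*}$), contributing $|A|^4$; if $u\neq0$ then $y_1'=(v/u)(x_1-u)=(v/u)x_2$, and since $x_2\in A$ this forces $\lambda:=v/u\in A$. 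Counting the free variables then shows that for fixed $u\neq0$ and $\lambda\in A$ there are exactly $r(u)^2\,r(\lambda u)$ solutions, where $r(w):=|A\cap(A+w)|$. The decisive point is that $r$ is \emph{invariant under dilation by $A$}: for $\lambda\in A$ the map $a\mapsto\lambda a$ permutes $A$, so $r(\lambda w)=r(w)$. This collapses the double sum to
\[
T=|A|^4+\sum_{u\neq0}\sum_{\lambda\in A}r(u)^2 r(\lambda u)=|A|^4+|A|\sum_{u\neq0}r(u)^3=|A|\cdot E_3^{+}(A),
\]
where $E_3^{+}(A):=\sum_{w}r(w)^3$ is the third additive energy of $A$. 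Combining with the first display gives the clean bound $N\ \ge\ |A|^7/E_3^{+}(A)$.

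It then remains to bound $E_3^{+}(A)$, and I expect this to be the main obstacle. The target is the estimate
\[
E_3^{+}(A)\ \lesssim\ |A|^{129/40}\qquad\text{for}\qquad |A|\le p^{1/2}\log p,
\]
since substituting it and using $|[A,A,0]|=|A|^2$ produces $N\gtrsim|A|^{7-129/40}=|A|^{151/40}=|[A,A,0]|^{151/80}$, exactly the claimed bound. To prove such an energy estimate I would pass to a dyadic decomposition of the level sets $\{w:r(w)\ge\Delta\}$; by the same dilation invariance each level set is a union of full multiplicative cosets of $A$, and I would bound the number of popular cosets through a point--line incidence estimate of Stevens--de Zeeuw type in $\mathbb{F}_p^2$ applied to the Cartesian grid $A\times A$. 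The genuine difficulty is that the easy bound $E_3^{+}(A)\lesssim|A|^{13/4}$ (e.g.\ from $E_3^{+}\le\max_w r(w)\cdot E^{+}(A)$ with $E^{+}(A)\lesssim|A|^{5/2}$) only yields the weaker exponent $|[A,A,0]|^{15/8}$; pushing the energy exponent down to $129/40$, and verifying that the incidence bound is nontrivial precisely in the window $|A|\le p^{1/2}\log p$, is where the arithmetic of multiplicative subgroups and the finite--field incidence machinery must be used carefully, and is the step I would expect to occupy most of the work.
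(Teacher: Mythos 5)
Your algebraic reduction is correct, and it is genuinely slicker than the paper's handling of the same quantity. The paper starts identically: Cauchy--Schwarz gives $N\ge |A|^8/S$, where $S$ is your $T$, the number of quadruples of matrices with $m_1m_2=m_3m_4$. But instead of your exact identity, the paper bounds $S\le |A|^4+X$ and estimates $X$ as a weighted point--line incidence count: the lines $y=\tfrac{d}{b-b'}(x-c)$ with $b+d-b'\in A$ form a multiset of size $E^{+}(A)|A|$ with $\sum_{l}m(l)^2\le X|A|$; a dyadic decomposition in the multiplicity, together with the bound $I(A\times A,L)\lesssim |A|^{4/3}|L|^{2/3}$ (which follows from the collinear-triples estimate $T(A)\lesssim |A|^4$ of \cite{mac}), yields $X\lesssim |A|^3E^{+}(A)^{1/2}$, and the subgroup energy bound $E^{+}(A)\lesssim |A|^{49/20}$ of \cite{s} then gives $S\lesssim |A|^{169/40}$. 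I verified your identity $T=|A|^4+|A|\sum_{u\ne 0}r(u)^3=|A|E_3^{+}(A)$; it uses the subgroup hypothesis exactly where the paper does (your step $\lambda=v/u=y_1'/x_2\in A$), and since $169/40=1+129/40$, your target $E_3^{+}(A)\lesssim |A|^{129/40}$ is numerically \emph{exactly} the paper's bound on $S$, repackaged.

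The genuine gap is that you never prove $E_3^{+}(A)\lesssim |A|^{129/40}$, and the route you sketch for it would fail. Applying the incidence machinery available at this scale to the level sets $D_\tau=\{w\ne 0\colon r(w)\ge \tau\}$ via the slope-one lines $y=x-w$ gives $\tau|D_\tau|\lesssim |A|^{4/3}|D_\tau|^{2/3}$, i.e. $|D_\tau|\lesssim |A|^4/\tau^3$, and summing dyadically yields only $E_3^{+}(A)\lesssim |A|^4\log|A|$, hence $N\gtrsim |A|^3$, exponent $3/2$ in $|[A,A,0]|$ --- far below $151/80$; Stevens--de Zeeuw applied directly does no better, and the coset structure of $D_\tau$ that you invoke only forces $|D_\tau|$ to be a multiple of $|A|$, which is lower-bound information and does not help. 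What closes your argument is arithmetic (Stepanov-method) input for subgroups rather than incidence geometry: for instance, the classical bound $\max_{w\ne 0}r(w)\ll |A|^{2/3}$ (Garcia--Voloch, Heath-Brown--Konyagin, valid well inside your range $|A|\le p^{1/2}\log p$) combined with $E^{+}(A)\lesssim |A|^{49/20}$ from \cite{s} gives $E_3^{+}(A)\le |A|^3+\max_{w\ne 0}r(w)\cdot E^{+}(A)\lesssim |A|^{187/60}$, which is stronger than your target $|A|^{129/40}$ and, through your identity, would even improve the theorem to $N\gtrsim |A|^{233/60}=|[A,A,0]|^{233/120}$. So your framework is sound and potentially sharper than the paper's, but as written the decisive quantitative step is missing, and the specific plan you propose for it (incidences on $A\times A$) cannot reach the required exponent.
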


In the setting of the real numbers, for any $A\subset \mathbb{R}$, Shkredov \cite{sh} recently proved that
\[|[A, A, 0][A, A, 0]|\gg |[A, A, 0]|^{\frac{7}{4}+c},\]
for some small $c>0$. This improves an earlier result given by Hegyv\'{a}ri and Hennecart \cite{HH}.  In our next theorem, we give a further improvement and extend it to the setting of the complex numbers. 
\begin{theorem}\label{thm5}
Let $A$ be a set in $\mathbb{C}$ with $|A|\ge 2$. We have 
\[|[A, A, 0][A, A, 0]|\gtrsim |A|^{\frac{29}{8}}=|[A, A, 0]|^{\frac{29}{16}}.\]
\end{theorem}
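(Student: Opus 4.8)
The plan is to make the group law explicit and reduce the statement to a sum--product--type energy inequality. A direct computation of the matrix product in $H_1(\mathbb{C})$ gives $[x,y,0]\cdot[x',y',0]=[x+x',\,y+y',\,xy']$, so that
\[
[A,A,0][A,A,0]=\{(x+x',\,y+y',\,xy')\colon x,x',y,y'\in A\}=:S,\qquad |[A,A,0]|=|A|^2,
\]
and it suffices to prove $|S|\gtrsim |A|^{29/8}$ (we may assume $0\notin A$, the contribution of $0$ being negligible for $|A|\ge 2$). For $(a,b,c)\in S$ let $r(a,b,c)$ be the number of representing quadruples in $A^4$; then $\sum r(a,b,c)=|A|^4$, and Cauchy--Schwarz yields $|S|\ge |A|^8/Q$ with $Q:=\sum_{(a,b,c)}r(a,b,c)^2$. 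Thus the entire problem becomes an upper bound on the collision count $Q$, and since $8-\tfrac{35}{8}=\tfrac{29}{8}$ it is exactly enough to show $Q\lesssim |A|^{35/8}$.

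The next step is to recognise $Q$ as a mixed additive--multiplicative energy. Expanding $Q$ counts octuples $(x_1,x_1',y_1,y_1',x_2,x_2',y_2,y_2')\in A^8$ with $x_1+x_1'=x_2+x_2'$, $y_1+y_1'=y_2+y_2'$ and $x_1y_1'=x_2y_2'$. Summing out $x_1',x_2'$ (which appear only in the first additive equation) introduces the factor $\delta(x_1-x_2)$, and likewise summing out $y_1,y_2$ introduces $\delta(y_1'-y_2')$, where $\delta(h):=|\{a\in A\colon a+h\in A\}|$; after relabelling $y_1',y_2'$ as $y_1,y_2$ this collapses $Q$ to
\[
Q=\sum_{\substack{x_1,x_2,y_1,y_2\in A\\ x_1y_1=x_2y_2}}\delta(x_1-x_2)\,\delta(y_1-y_2).
\]
The diagonal $x_1=x_2$ (which forces $y_1=y_2$) contributes exactly $|A|^4\le |A|^{35/8}$, so the off--diagonal part is the genuine content.

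Before bounding the off--diagonal part I would dispose of the additively unstructured case. Projecting $S$ to its first two coordinates gives all of $(A+A)\times(A+A)$, whence $|S|\ge |A+A|^2$; thus if $|A+A|\ge |A|^{29/16}$ we are already done, and we may assume $|A+A|<|A|^{29/16}$. Small sumset gives strong control of the additive energies $E^{+}(A)$ and $E_3^{+}(A)=\sum_h\delta(h)^3$, and, through sum--product over $\mathbb{C}$, forces multiplicative spreading of $A$. Applying Cauchy--Schwarz to the bilinear form above and using the $x\leftrightarrow y$ symmetry of the constraint reduces $Q$ to the one--sided energy
\[
Q\ \le\ \sum_{x_1y_1=x_2y_2}\delta(x_1-x_2)^2\ =\ \sum_{x_1,x_2\in A}\delta(x_1-x_2)^2\,\mu(x_1/x_2),\qquad \mu(r):=|\{a\in A\colon ra\in A\}|,
\]
which I would estimate by a two--parameter dyadic decomposition over the levels $\delta(\cdot)\sim\Delta$ and $\mu(\cdot)\sim M$, bounding each level by the Szemerédi--Trotter theorem over $\mathbb{C}$ applied to the points $A\times A$ and the lines $\{ex+dy=de\}$ that carry the collinear multiplicative quadruples with $x_1-x_2=d$, $y_1-y_2=e$.

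The main obstacle is precisely this last energy bound. The naive Szemerédi--Trotter estimate, using only the trivial count $|\{d\colon\delta(d)\sim\Delta\}|\lesssim |A|^2/\Delta$, yields at best $Q\lesssim |A|^{14/3}$, which is weaker than the required $|A|^{35/8}$; the loss comes entirely from the high--multiplicity regime where $\delta$ is close to $|A|$. To close this gap one must feed in the sharper counting $|\{d\colon\delta(d)\sim\Delta\}|\lesssim E^{+}(A)/\Delta^{2}$ for the popular--difference sets, interpolate the incidence estimate against the global multiplicative--energy bound (the number of quadruples at any level is at most $E^{\times}(A)$, and $\sum_{x_1,x_2}\mu(x_1/x_2)^2=E_3^{\times}(A)$), and then optimise the resulting dyadic sum using the small--sumset control of $E^{+}(A)$ and the complex sum--product bounds for $E_3^{\times}(A)$. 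Balancing these contributions is what produces the exponent $35/8$, hence $|S|\gtrsim |A|^{29/8}=|[A,A,0]|^{29/16}$; carrying out this optimisation cleanly, while tracking the logarithmic factors absorbed into $\lesssim$, is the delicate heart of the argument.
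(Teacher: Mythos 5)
Your setup is sound and matches the paper's first half: the explicit group law, the Cauchy--Schwarz reduction $|S|\ge |A|^8/Q$, the identification of $Q$ with the octuple count for the system $x_1+x_1'=x_2+x_2'$, $y_1+y_1'=y_2+y_2'$, $x_1y_1'=x_2y_2'$, and the target $Q\lesssim |A|^{35/8}$ are all correct. But there is a genuine gap exactly where you write that ``balancing these contributions is what produces the exponent $35/8$'': the bound is never established, and you concede that your incidence estimate stalls at $Q\lesssim |A|^{14/3}$. The sketched repair cannot work as described. First, your dichotomy is the wrong one: assuming $|A+A|<|A|^{29/16}$ gives no upper control of $E^+(A)$ or of $E_3^+(A)=\sum_h\delta(h)^3$. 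An arithmetic progression has $|A+A|\ll|A|$, so it survives your reduction, yet $\delta(h)\sim|A|$ for $\sim|A|$ differences, so $E_3^+(A)\sim|A|^4$ and the weights $\delta(\cdot)^2$ in your bilinear form are essentially maximal; a small sumset only \emph{lower}-bounds additive energy, via $E^+(A)\ge|A|^4/|A+A|$. In that regime one must win entirely on the multiplicative side, and your two-parameter dyadic scheme never quantifies that gain. Second, no interpolation between T\'{o}th's Szemer\'{e}di--Trotter theorem and level-set counts alone will reach $35/8$: the missing input is a theorem, not an optimization.

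The paper closes precisely this gap with the energy decomposition of Rudnev, Shkredov and Stevens \cite{iliya}: there exist $B,C\subset A$ with $|B|,|C|\ge|A|/3$ and $E^+(B)\cdot E^\times(C)\lesssim|A|^{11/2}$, so after replacing $A$ by $B$ or by $C$ one may assume $\min\{E^+(A),E^\times(A)\}\lesssim|A|^{11/4}$. It then uses two complementary bounds on the sextuple count $X$ (your off-diagonal $Q$): the incidence argument from the proof of Theorem \ref{thm3}, with Theorem \ref{incidence-C} in place of the subgroup bound and a dyadic decomposition in the multiplicity of the lines $y=\frac{d}{b-b'}(x-c)$, gives $X\lesssim|A|^3E^+(A)^{1/2}$; and Lemma \ref{thmx-x9} --- two applications of Cauchy--Schwarz plus the collinear-triples bound $T(A)\lesssim|A|^4$ of Corollary \ref{cococo} --- gives $X\lesssim|A|^3E^\times(A)^{1/2}$. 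Whichever energy is small, $X\lesssim|A|^{3+\frac{11}{8}}=|A|^{35/8}$, hence $|[A,A,0][A,A,0]|\gg|A|^{29/8}$. Your quantity $\mu(r)$ and the inequality $Q\le\sum_{x_1,x_2}\delta(x_1-x_2)^2\mu(x_1/x_2)$ are a reasonable skeleton, but without the $E^+\cdot E^\times$ decomposition (or an equivalent structural dichotomy) the exponent $35/8$ is not reachable by the means you describe.
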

\section*{Acknowledgments}
Doowon Koh was supported by the National Research Foundation of Korea (NRF) grant funded by the Korea government (MIST) (No. NRF-2018R1D1A1B07044469).  Thang Pham was supported by Swiss National Science Foundation grant P400P2-183916.
\section{Proof of Theorem \ref{thm0}}
To prove Theorem \ref{thm0}, we need to recall a lemma given by the third, fourth, fifth listed authors in 
\cite{kpv}. 

Let $X$ be a multi-set in $\mathbb{F}_q^{2n}\times \mathbb{F}_q$. We denote by $\overline{X}$ the set of distinct elements in the multi-set $X$.
The cardinality of $X$, denoted by $|X|$, is $\sum_{x\in \overline{X}}m_X(x)$, where $m_X(x)$ is the multiplicity of $\mathbf{x}$ in $X$. For multi-sets $\mathcal{A}, \mathcal{B}\subset \mathbb{F}_q^{2n+1}$, let $N(\mathcal{A}, \mathcal{B})$ be the number of pairs $\left((\mathbf{a}, b), (\mathbf{c}, d)\right)\in \mathcal{A}\times \mathcal{B}\subset \left(\mathbb{F}_q^{2n}\times \mathbb{F}_q\right)^2$ such that $\mathbf{a}\cdot \mathbf{c}=b+d$. We have the following lemma on an upper bound of $N(\mathcal{A}, \mathcal{B})$. 

\bigskip

\begin{lemma}[\cite{kpv}, Lemma 8.1]\label{fourier}
Let $\mathcal{A}, \mathcal{B}$ be a multi-sets in $\mathbb{F}_q^{2n}\times \mathbb{F}_q$. We have
\[\left\vert N(\mathcal{A}, \mathcal{B})-\frac{|\mathcal{A}||\mathcal{B}|}{q}\right\vert \le q^{n}\left(\sum_{(a, b)\in \overline{\mathcal{A}}}m_\mathcal{A}((a, b))^2\sum_{(c, d)\in \overline{\mathcal{B}}}m_\mathcal{B}((c, d))^2\right)^{1/2}.\]
\end{lemma}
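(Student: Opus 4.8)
The plan is to detect the equation $\mathbf{a}\cdot\mathbf{c}=b+d$ by additive characters and split off a main term, controlling the remainder by Cauchy--Schwarz together with Plancherel's identity. Fix a nontrivial additive character $\chi$ of $\mathbb{F}_q$ and use orthogonality to write the indicator of $\mathbf{a}\cdot\mathbf{c}-b-d=0$ as $q^{-1}\sum_{t\in\mathbb{F}_q}\chi(t(\mathbf{a}\cdot\mathbf{c}-b-d))$. Substituting this into the definition of $N(\mathcal{A},\mathcal{B})$ and weighting each point by its multiplicity gives
\[N(\mathcal{A},\mathcal{B})=\frac{1}{q}\sum_{t\in\mathbb{F}_q}\ \sum_{(\mathbf{a},b)\in\overline{\mathcal{A}}}\ \sum_{(\mathbf{c},d)\in\overline{\mathcal{B}}}m_\mathcal{A}((\mathbf{a},b))\,m_\mathcal{B}((\mathbf{c},d))\,\chi(t(\mathbf{a}\cdot\mathbf{c}-b-d)).\]
The term $t=0$ contributes exactly $q^{-1}|\mathcal{A}||\mathcal{B}|$, which is the asserted main term, so the task reduces to bounding $q^{-1}\sum_{t\neq 0}S_t$, where $S_t$ denotes the inner double sum for a fixed $t$.

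For each fixed $t\neq 0$ I would first collapse the scalar variables $b$ and $d$ into partial Fourier transforms, setting $f_t(\mathbf{a})=\sum_b m_\mathcal{A}((\mathbf{a},b))\chi(-tb)$ and $g_t(\mathbf{c})=\sum_d m_\mathcal{B}((\mathbf{c},d))\chi(-td)$, so that $S_t=\sum_{\mathbf{a},\mathbf{c}}f_t(\mathbf{a})g_t(\mathbf{c})\chi(t\,\mathbf{a}\cdot\mathbf{c})$. The bilinear factor $\chi(t\,\mathbf{a}\cdot\mathbf{c})$ is precisely the Fourier kernel on $\mathbb{F}_q^{2n}$: viewing $\sum_\mathbf{c}g_t(\mathbf{c})\chi(t\,\mathbf{a}\cdot\mathbf{c})$ as the Fourier transform of $g_t$ at the frequency $t\mathbf{a}$, Cauchy--Schwarz in $\mathbf{a}$ followed by Plancherel yields
\[|S_t|\le q^{n}\left(\sum_{\mathbf{a}}|f_t(\mathbf{a})|^2\right)^{1/2}\left(\sum_{\mathbf{c}}|g_t(\mathbf{c})|^2\right)^{1/2}.\]
Here the hypothesis $t\neq 0$ enters crucially: the dilation $\mathbf{a}\mapsto t\mathbf{a}$ is a bijection of $\mathbb{F}_q^{2n}$, so summing $|\widehat{g_t}(t\mathbf{a})|^2$ over $\mathbf{a}$ recovers the full Plancherel sum $q^{2n}\sum_\mathbf{c}|g_t(\mathbf{c})|^2$, which is what produces the factor $q^n$.

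It then remains to sum over $t\neq 0$. Applying Cauchy--Schwarz in $t$ and then evaluating the second moments by orthogonality finishes the estimate: expanding $\sum_{t\in\mathbb{F}_q}\sum_\mathbf{a}|f_t(\mathbf{a})|^2$ and summing $\chi(-t(b-b'))$ over all $t$ forces $b=b'$, giving $q\sum_{(\mathbf{a},b)\in\overline{\mathcal{A}}}m_\mathcal{A}((\mathbf{a},b))^2$, and similarly for $g_t$. Assembling the pieces, the powers of $q$ combine as $q^{-1}\cdot q^{n}\cdot q^{1/2}\cdot q^{1/2}=q^{n}$, leaving exactly the right-hand side of the lemma. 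The one delicate point throughout is the bilinear coupling $\chi(t\,\mathbf{a}\cdot\mathbf{c})$, which prevents $S_t$ from factoring into independent sums; the whole argument is organized so that the two scalar variables $b,d$ are handled by orthogonality while the $2n$ vector variables are handled by Plancherel, and keeping the constant bookkeeping exact so that the powers of $q$ telescope to $q^n$ is where I would be most careful.
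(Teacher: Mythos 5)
Your proof is correct: the orthogonality expansion isolating the $t=0$ main term $|\mathcal{A}||\mathcal{B}|/q$, the Cauchy--Schwarz in $\mathbf{a}$ with the crucial observation that $\mathbf{a}\mapsto t\mathbf{a}$ is a bijection for $t\neq 0$ (yielding the factor $q^n$ via Plancherel on $\mathbb{F}_q^{2n}$), and the final Cauchy--Schwarz in $t$ collapsing $\sum_t\chi(-t(b-b'))$ to $q\sum m_\mathcal{A}^2$ all check out, with the powers of $q$ combining to exactly $q^n$. The paper itself imports this lemma from \cite{kpv} without reproducing a proof, and your argument is essentially the standard character-sum proof given there, the same Fourier technique the paper deploys in its proof of Lemma \ref{lm2}.
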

\bigskip
Theorem \ref{thm0} is a direct consequence of the following theorem.
\bigskip
\begin{theorem}
For $A\subset \mathbb{F}_p$, we have 
\[|[A, A, 0][A, A, 0]|\gg \min \left\lbrace \frac{|A|^5}{q}, \frac{q|A|^5}{E^+(A)}\right\rbrace.\]
\end{theorem}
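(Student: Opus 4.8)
The plan is to bound the product set from below by a second‑moment (Cauchy--Schwarz) argument, and then to estimate the resulting group multiplicative energy through Lemma \ref{fourier}. Writing the group law for $n=1$ as $[a,b,0][a',b',0]=[a+a',\,b+b',\,ab']$, let $r(s)$ be the number of representations of $s$ as a product of two elements of $[A,A,0]$. Then $\sum_s r(s)=|A|^4$, and Cauchy--Schwarz gives
\[
|[A,A,0][A,A,0]|\ \ge\ \frac{|A|^8}{\sum_s r(s)^2}\ =\ \frac{|A|^8}{T},
\]
where $T=\#\{(a,a',b,b',c,c',d,d')\in A^8 : a+a'=c+c',\ b+b'=d+d',\ ab'=cd'\}$. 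Thus it suffices to prove $T\ll \frac{E^+(A)^2}{q}+q|A|^3$, since dividing $|A|^8$ by this is exactly $\min\{|A|^5/q,\ q|A|^5/E^+(A)\}$ after using $E^+(A)\le |A|^3$.

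To estimate $T$ I would fold the two additive constraints into multiplicities and read the remaining bilinear constraint $ab'=cd'$ through the lemma. Summing over the free variables $a',c'$ and $b,d$ yields
\[
T=\sum_{\substack{a,c,b',d'\in A\\ ab'=cd'}} r_-(c-a)\,r_-(d'-b'),\qquad r_-(t):=|\{(x,y)\in A^2: x-y=t\}|,
\]
which is precisely $N(\mathcal A,\mathcal B)$ for the multi‑sets $\mathcal A=\{((a,-c),0)\}$ and $\mathcal B=\{((b',d'),0)\}$ in $\mathbb F_q^2\times\mathbb F_q$ carrying multiplicities $r_-(c-a)$ and $r_-(d'-b')$, because $(a,-c)\cdot(b',d')=ab'-cd'$. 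A direct computation gives $|\mathcal A|=|\mathcal B|=\sum_{a,c}r_-(c-a)=E^+(A)$, so the main term of Lemma \ref{fourier} is $E^+(A)^2/q$. Whenever this main term dominates we immediately get $T\ll E^+(A)^2/q$ and hence $|[A,A,0][A,A,0]|\gg q|A|^5/E^+(A)$, which is the second quantity in the minimum.

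The hard part will be the error term, which is where the first quantity $|A|^5/q$ has to come from. Lemma \ref{fourier} contributes $q\big(\sum_{a,c}r_-(c-a)^2\big)=q\sum_t r_-(t)^3$, and the crux is to show its effect is only of order $q|A|^3$. The naive bound $\sum_t r_-(t)^3\le |A|\,E^+(A)$ is too lossy for additively structured $A$ (already for an arithmetic progression it overshoots by a factor $|A|$), so I expect to have to treat the error‑dominated range by hand. Here I would use the decomposition $T=\sum_{u,v}E^{\times}(A_u,A_v)$ over the level sets $A_u=\{a\in A: u-a\in A\}$: on the pairs $(u,v)$ with $|A_u||A_v|\ge q^2$ Lemma \ref{fourier} is sharp and the errors sum against $\sum_u|A_u|^2=E^+(A)$, while on the remaining pairs I would fall back on the trivial bound $|[A,A,0][A,A,0]|\ge |A\cdot A|\,|A|^2$. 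Reconciling these two regimes into the single clean estimate $T\ll E^+(A)^2/q+q|A|^3$ is the step I expect to require the most care, since it is exactly where the interplay between the additive energy and the multiplicative spreading of the level sets must be used.
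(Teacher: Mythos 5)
Your opening is sound and in fact coincides with the paper's: the same Cauchy--Schwarz reduction $|[A,A,0][A,A,0]|\ge |A|^8/T$, the same system of three equations, and your identity $T=N(\mathcal A,\mathcal B)$ with $|\mathcal A|=|\mathcal B|=E^+(A)$ is correctly set up for Lemma \ref{fourier}. But the proof is incomplete at exactly the step you flag, and the sketched repair does not close it. Folding \emph{both} additive constraints into multiplicities forces the error term $q\sum_t r_-(t)^3$, which can be as large as $q|A|E^+(A)$, far above the needed $q|A|^3$ (for an arithmetic progression it is of order $q|A|^4$). Your two-regime plan for $T=\sum_{u,v}E^\times(A_u,A_v)$ then has two concrete defects. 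First, the fallback for the bad regime is a non sequitur: to run the Cauchy--Schwarz argument you need an \emph{upper} bound on the bad pairs' contribution to $T$, and a lower bound $|[A,A,0][A,A,0]|\ge |A\cdot A|\,|A|^2$ valid ``in the other regime'' cannot be substituted for it --- the bad pairs still sit inside the denominator $|A|^8/T$, so the two regimes cannot be reconciled without restructuring the whole argument. Second, even granting a per-pair estimate $E^\times(A_u,A_v)\le |A_u|^2|A_v|^2/q+q|A_u||A_v|$, the errors sum to $q\bigl(\sum_u|A_u|\bigr)^2=q|A|^4$, and on the pairs with $|A_u||A_v|<q^2$ the trivial bound $|A_u||A_v|\min(|A_u|,|A_v|)$ can contribute up to $|A|^5\gg q|A|^3$ once $|A|>q^{1/2}$. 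So the estimate $T\ll E^+(A)^2/q+q|A|^3$ remains unproven, and you acknowledge as much.

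The paper avoids all of this by \emph{not} symmetrizing: it eliminates $a$ and $d'$ from the two additive equations and bounds $T$ (up to the degenerate contribution $|A|^4\le q|A|^3$ from $b=b'$, $c=c'$) by the number $X$ of tuples $(a',c,c',b,b',d)\in A^6$ with $a'=\frac{d}{b-b'}(c'-c)$ and $b+d-b'\in A$, dropping the membership condition on $a$. This is an incidence count between the multiplicity-free point set $A\times A$ and a multiset of $E^+(A)|A|$ lines, and the key trick --- which is what your approach is missing --- is the self-referential second-moment bound $\sum_{l}m(l)^2\le X|A|$: the second moment of the line multiplicities is controlled by the very quantity being estimated. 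Lemma \ref{fourier} then gives $X\le \frac{|A|^3E^+(A)}{q}+q^{1/2}X^{1/2}|A|^{3/2}$, a quadratic inequality in $X^{1/2}$ that solves to $X\ll \frac{|A|^3E^+(A)}{q}+q|A|^3$. Note also that you are aiming at more than is needed: the paper's main term is $|A|^3E^+(A)/q$, not your stronger $E^+(A)^2/q$, and since $|A|^8\big/\bigl(|A|^3E^+(A)/q\bigr)=q|A|^5/E^+(A)$, this weaker bound already yields the stated minimum. Retargeting your argument at $T\ll |A|^3E^+(A)/q+q|A|^3$ and using the asymmetric elimination plus the self-referential multiplicity bound is the way to complete it.
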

\begin{proof}
Without loss of generality, we assume that $0\not\in A$. Let $S$ be the number of quadruples of matrices $(m_1, m_2, m_3, m_4)$ in $[A, A, 0]^4$ such that $m_1m_2=m_3m_4$. By the Cauchy-Schwarz inequality, we have 
\[|[A,A, 0]^2|\ge \frac{|A|^8}{S}.\]
In the next step, we are going to show that 
\[S\ll \frac{|A|^3E^+(A)}{q}+q|A|^3.\]
Indeed, it is not hard to check that $S$ is equal to the number of tuples $(a, b, c, d, a', b', c', d')$ in $A^8$ such that 
\begin{align}
\label{Nsystem11} &a+c=a'+c',\\
\label{Nsystem21} &b+d=b'+d'\\
\label{Nsystem31} &ad=a'd'
\end{align}
It follows from (\ref{Nsystem11}) and (\ref{Nsystem21}) that $a=a'+c'-c$ and $d'=b+d-b'$. Substituting into (\ref{Nsystem31}), we obtain 
\[(a'+c'-c)\cdot d=a'\cdot (b+d-b').\]
This implies that 
\begin{equation}\label{mat1}d(c'-c)=a'(b-b').\end{equation}
This is equivalent with 
\[a'=\frac{d}{b-b'}(c'-c).\]
It follows from (\ref{mat1}) that if $b=b'$ then $c=c'$. We note that the number of tuples $(a', b, b', c, c', d)\in A^6$ with $b=b'$ and $c=c'$ is at most $|A|^4$.  We now count the number of tuples with $b\ne b'$ and $c\ne c'$. It is not hard to check the number of tuples $(a, b, c, d, a', b', c', d')\in A^8$ satisfying (\ref{Nsystem11}-\ref{Nsystem31}) is at most the number of tuples $(a', c, c', b, b', d)\in A^6$ with 
\[a'=\frac{d}{b-b'}(c'-c),\]
where $a', c'\in A$ and $b+d-b'\in A$. Let $X$ be the number of such tuples. 

It is not hard to check that $X$ is bounded by the number of incidences between the point set $P=A\times A$ and the multi-set $L$ of lines of the from $y=\frac{d}{b-b'}(x-c)$ with $b+d-b'\in A$. It is clear that $|P|=|A|^2$ and $|L|=E^+(A)|A|$.  

Let $\mathcal{L}$ be the multi-set in $\mathbb{F}_q^2$ containing points of the form $(\frac{d}{b-b'}, \frac{d}{b-b'}\cdot c)$ with $b+d-b'\in A$ and $c\in A$. On the other hand, by an elementary calculation, we have 
$\sum_{l\in \overline{\mathcal{L}}}m(l)^2\le X|A|$, and $|\mathcal{L}|=|L|$. It is not hard to check that  $X=N(P, \mathcal{L})$, where $N(P, \mathcal{L})$ is defined as in Lemma \ref{fourier}. Applying Lemma \ref{fourier}, we have 
\[X\le \frac{|A|^3E^+(A)}{q}+q^{1/2}X^{1/2}|A|^{3/2},\]
which implies that 
\[X\le \frac{|A|^3E^+(A)}{q}+q|A|^3.\]
In other words, we have 
\[S\le \frac{|A|^3E^+(A)}{q}+q|A|^3+|A|^4\ll \frac{|A|^3E^+(A)}{q}+q|A|^3.\]
\end{proof}
\section{Proof of Theorem \ref{thm1}}
In order to prove Theorem \ref{thm1}, we first prove the following lemma. 
\begin{lemma}\label{lm2}
Let $E$ be a set in $\mathbb{F}_q^n$. Let $T$ be the number of triples $(\mathbf{v}, \mathbf{x}, \mathbf{x}')\in E^3$ such that $\mathbf{v}\cdot (\mathbf{x}-\mathbf{x}')=0$. Then we have 
\[T\le \frac{|E|^3}{q}+q^n|E|.\]
\end{lemma}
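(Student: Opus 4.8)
The plan is to evaluate $T$ by the standard orthogonality/Fourier argument over $\mathbb{F}_q$, arranging matters so that the quadratic form that appears is a genuine square, which is what produces the sharp error term. Fix a nontrivial additive character $\chi$ of $\mathbb{F}_q$ and write the indicator of $\{t=0\}$ as $\mathbf{1}[t=0]=\frac{1}{q}\sum_{s\in\mathbb{F}_q}\chi(st)$. Applying this to $t=\mathbf{v}\cdot(\mathbf{x}-\mathbf{x}')$ and interchanging the order of summation gives
\[
T=\frac{1}{q}\sum_{s\in\mathbb{F}_q}\sum_{\mathbf{v},\mathbf{x},\mathbf{x}'\in E}\chi\big(s\,\mathbf{v}\cdot(\mathbf{x}-\mathbf{x}')\big).
\]

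The decisive observation is that, for each fixed $s$ and $\mathbf{v}$, the sum over $\mathbf{x},\mathbf{x}'$ factorises as $|f(s\mathbf{v})|^2$, where $f(\mathbf{w}):=\sum_{\mathbf{x}\in E}\chi(\mathbf{w}\cdot\mathbf{x})$. Thus every summand is real and nonnegative, and
\[
T=\frac{1}{q}\sum_{s\in\mathbb{F}_q}\sum_{\mathbf{v}\in E}|f(s\mathbf{v})|^2.
\]
The term $s=0$ equals exactly $\frac{1}{q}\sum_{\mathbf{v}\in E}|f(\mathbf{0})|^2=\frac{1}{q}\cdot|E|\cdot|E|^2=\frac{|E|^3}{q}$, which is the main term.

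For the contribution of $s\neq 0$ I would invoke Parseval, $\sum_{\mathbf{w}\in\mathbb{F}_q^n}|f(\mathbf{w})|^2=q^n|E|$. Since for fixed $s\neq 0$ the scaling $\mathbf{v}\mapsto s\mathbf{v}$ is injective, the points $\{s\mathbf{v}:\mathbf{v}\in E\}$ are distinct, so $\sum_{\mathbf{v}\in E}|f(s\mathbf{v})|^2\le\sum_{\mathbf{w}\in\mathbb{F}_q^n}|f(\mathbf{w})|^2=q^n|E|$. Summing over the $q-1$ nonzero values of $s$ and dividing by $q$ bounds the $s\neq 0$ part by $\frac{q-1}{q}q^n|E|\le q^n|E|$. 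Combining this with the main term, and using that every term in the expansion of $T$ is nonnegative, yields $T\le \frac{|E|^3}{q}+q^n|E|$, as claimed.

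The step I expect to matter most is the factorisation into $|f(s\mathbf{v})|^2$: it is precisely this that renders the off-diagonal ($s\neq 0$) contribution nonnegative and lets me bound it slice-by-slice by the full Parseval mass. A more mechanical route, namely encoding the triple condition as a bilinear equation in $\mathbb{F}_q^{2n}\times\mathbb{F}_q$ via $\mathbf{a}=(\mathbf{v},\mathbf{v})$, $\mathbf{c}=(\mathbf{x},-\mathbf{x}')$ and applying Lemma \ref{fourier}, only delivers the weaker error $q^n|E|^{3/2}$, since it treats the form as a generic bilinear one rather than a perfect square; hence the direct computation is what secures the sharper $q^n|E|$.
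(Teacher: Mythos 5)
Your proof is correct and takes essentially the same route as the paper: both expand the constraint $\mathbf{v}\cdot(\mathbf{x}-\mathbf{x}')=0$ by orthogonality, identify the $(\mathbf{x},\mathbf{x}')$-sum as the nonnegative square $|f(s\mathbf{v})|^2$ (the paper writes this as $q^{2n}|\widehat{E}(s\mathbf{v})|^2$ with the normalized transform), and bound the $s\neq 0$ contribution via the injectivity of $\mathbf{v}\mapsto s\mathbf{v}$ together with Parseval, $\sum_{\mathbf{z}}|\widehat{E}(\mathbf{z})|^2=q^{-n}|E|$. Your closing observation, that routing the count through Lemma \ref{fourier} would treat the form as generic bilinear and yield only the weaker error $q^{n}|E|^{3/2}$, is accurate and explains why the direct computation is the right choice here.
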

Before proving Lemma  \ref{lm2}, we need to review the Fourier transform of functions on $\mathbb{F}_q^n$. Let $\chi$ be a non-trivial additive character on $\mathbb{F}_q$. For a function $f: \mathbb{F}_q ^n\to \mathbb{C}$, we define 
\[\widehat{f}(\mathbf{m})=q^{-n} \sum_{\mathbf{x} \in \mathbb{F}_q^n} \chi(-\mathbf{x} \cdot \mathbf{m}) f(\mathbf{x}).\]
It is not hard to see that \[ f(\mathbf{x})=\sum_{\mathbf{m} \in \mathbb{F}_q^n} \chi(\mathbf{x} \cdot \mathbf{m}) \widehat{f}(\mathbf{m}),\] and
\[ \sum_{\mathbf{m} \in \mathbb{F}_q^n} {|\widehat{f}(\mathbf{m})|}^2=q^{-n} \sum_{\mathbf{x} \in \mathbb{F}_q^n} {|f(\mathbf{x})|}^2.\]

We are now ready to prove Lemma \ref{lm2}.
\paragraph{Proof of Lemma \ref{lm2}:} The number $T$ can be expressed as follows:
\begin{align}\label{keymoment1}
T=\sum_{\mathbf{x} \cdot \mathbf{v}-\mathbf{x}' \cdot \mathbf{v}=0} E(\mathbf{v})E(\mathbf{x})E(\mathbf{x}')
&=\frac{|E|^3}{q}+\frac{1}{q}\sum_{s \not=0} \sum_{\mathbf{v}, \mathbf{x}, \mathbf{x}'} \chi(s\mathbf{v} \cdot (\mathbf{x}-\mathbf{x}')) E(\mathbf{v}) E(\mathbf{x})E(\mathbf{x}')\nonumber\\
&= \frac{|E|^3}{q}+q^{2n-1} \sum_{s \not=0} \sum_{\mathbf{v}} {|\widehat{E}(s\mathbf{v})|}^2 E(\mathbf{v})\nonumber\\
&\leq \frac{|E|^3}{q}+q^{2n}  \sum_{\mathbf{z} \in \mathbb{F}_q^n} {|\widehat{E}(\mathbf{z})|}^2\nonumber\\
&=\frac{|E|^3}{q}+q^{n}|E|.
\end{align}
where we used $\sum_{\mathbf{z}\in \mathbb{F}_q^n}{|\widehat{E}(\mathbf{z})|}^2=q^{-n}|E|.$ This completes the proof of the lemma. $\hfill\square$

We are ready to prove Theorem \ref{thm1}.
\paragraph{Proof of Theorem \ref{thm1}:}
Let $S$ be the number of quadruples of matrices $(m_1, m_2, m_3, m_4)$ in $[E, E, 0]^4$ such that $m_1m_2=m_3m_4$. By the Cauchy-Schwarz inequality, we have 
\[|[E,E, 0][E, E, 0]|\ge \frac{|E|^8}{S}.\]
In the next step, we are going to show that 
\[S\le \frac{|E|^6}{q}+q^{n-1}|E|^4+q^{2n}|E|^2.\]
Indeed, it is not hard to check that $S$ is equal to the number of tuples $(\mathbf{a}, \mathbf{b}, \mathbf{c}, \mathbf{d}, \mathbf{a'}, \mathbf{b'}, \mathbf{c'}, \mathbf{d'})$ in $E^8$ such that 
\begin{align}
\label{Nsystem1} &\mathbf{a}+\mathbf{c}=\mathbf{a'}+\mathbf{c}',\\
\label{Nsystem2} &\mathbf{b}+\mathbf{d}=\mathbf{b'}+\mathbf{d'}\\
\label{Nsystem3} &\mathbf{a}\cdot \mathbf{d}=\mathbf{a}'\cdot \mathbf{d}'
\end{align}
It follows from (\ref{Nsystem1}) and (\ref{Nsystem2}) that $\mathbf{a}=\mathbf{a'}+\mathbf{c'}-\mathbf{c}$ and $\mathbf{d'}=\mathbf{b}+\mathbf{d}-\mathbf{b'}$. Substituting into (\ref{Nsystem3}), we obtain 
\[(\mathbf{a'}+\mathbf{c}'-\mathbf{c})\cdot \mathbf{d}=\mathbf{a}'\cdot (\mathbf{b}+\mathbf{d}-\mathbf{b'}).\]
This implies that 
\begin{equation}\label{mat}\mathbf{d}\cdot (\mathbf{c'}-\mathbf{c})=\mathbf{a'}\cdot (\mathbf{b}-\mathbf{b'}).\end{equation}
For any tuples $(\mathbf{c}, \mathbf{c'}, \mathbf{b}, \mathbf{b'}, \mathbf{d}, \mathbf{a'})$ satisfying (\ref{mat}), we have $\mathbf{a}$ and $\mathbf{d'}$ are determined uniquely by (\ref{Nsystem1}) and (\ref{Nsystem2}). 

Let $\mathcal{A}$ and $\mathcal{B}$ be multisets defined as follows: 
\[\mathcal{A}=\{(\mathbf{d}, -\mathbf{b}, \mathbf{d}\cdot \mathbf{c})\colon \mathbf{b}, \mathbf{c}, \mathbf{d}\in E\}, ~\mathcal{B}=\{(\mathbf{c'}, \mathbf{a'}, -\mathbf{a'}\cdot \mathbf{b'})\colon \mathbf{a'}, \mathbf{b'}, \mathbf{c'}\in E\}.\]
Let $N(\mathcal{A}, \mathcal{B})$ the number defined as in Lemma \ref{fourier}. We have that the number of tuples satisfying (\ref{mat}) is equal to $N(\mathcal{A}, \mathcal{B})$. 

In order to apply Lemma \ref{fourier}, we need to estimate $\sum_{\mathbf{x}\in \overline{\mathcal{A}}}m_{\mathcal{A}}(\mathbf{x})^2$ and $\sum_{\mathbf{y}\in \overline{\mathcal{B}}}m_{\mathcal{B}}(\mathbf{y})^2$. 

By an elementary calculation, we have 
\[\sum_{\mathbf{x}\in \overline{A}}m_{\mathcal{A}}(\mathbf{x})^2, ~\sum_{\mathbf{y}\in \overline{\mathcal{B}}}m_{\mathcal{B}}(\mathbf{y})^2\le |E|T,\]
where $T$ is the number of triples $(\mathbf{v}, \mathbf{x}, \mathbf{x}')\in E^3$ such that $\mathbf{v}\cdot (\mathbf{x}-\mathbf{x}')=0$. 

On the other hand, Lemma \ref{lm2} gives us 
\[T\le \frac{|E|^3}{q}+q^{n}|E|.\]
Therefore, one can apply Lemma \ref{fourier} with $|\mathcal{A}|=|\mathcal{B}|=|E|^3$ to derive 
\[S\le \frac{|E|^6}{q}+q^n\left(\frac{|E|^4}{q}+q^n|E|^2\right)\ll \frac{|E|^6}{q},\]
whenever $|E|\gg q^{\frac{2n+1}{4}}$. This concludes the proof of the theorem. $\hfill\square$. 
\section{Proof of Theorem \ref{thm2}}
To prove Theorem \ref{thm2}, we need to use the following lemmas. The first lemma is a point-line incidence bound due to Stevens and De Zeeuw in \cite{lund}. 
\bigskip
\begin{lemma}\label{frank}
Let $P$ be a point set in $\mathbb{F}_p^2$ and $L$ be a set of lines in $\mathbb{F}_p^2$. Suppose that $|P|\le p^{8/5}$, then the number of incidences between $P$ and $L$, denoted by $I(P, L)$, satisfying 
\[I(P, L)\ll |P|^{11/15}|L|^{11/15}+|P|+|L|.\]
\end{lemma}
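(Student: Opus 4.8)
The plan is to obtain Lemma \ref{frank} from Rudnev's point-plane incidence bound in $\mathbb{F}_p^3$, which is the engine behind essentially all sub-Szemer\'edi--Trotter bounds over prime fields. Recall that Rudnev's theorem asserts that for a set $Q$ of points and a set $\Pi$ of planes in $\mathbb{F}_p^3$ with $|Q|\le |\Pi|$ and $|Q|\ll p^2$, if no line contains more than $k$ points of $Q$, then $I(Q,\Pi)\ll |\Pi|\,|Q|^{1/2}+k\,|\Pi|$. The whole point is to repackage planar point-line incidences, which are governed by the bilinear relation $b=sa+t$, as point-plane incidences in one higher dimension so that this estimate applies.

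First I would treat the Cartesian case $P=A\times B$, where the three-dimensional configuration is cleanest. An incidence between $(a,b)\in A\times B$ and a non-vertical line $\ell:y=sx+t$ is precisely the equation $b=sa+t$, which is separately linear in the point coordinates $(a,b)$ and in the line coordinates $(s,t)$. I would exploit this bilinearity by introducing an auxiliary variable to split the product $sa$, turning the collection of incidences into an incidence problem between a point set built from $A\times B$ together with the slopes and a set of planes indexed by the lines of $L$. Applying Rudnev's bound to this configuration and summing dyadically over the richness level (the number of lines meeting $P$ in between $\tau$ and $2\tau$ points) would, after optimizing the threshold $\tau$, yield an asymmetric estimate of the shape $I(A\times B,L)\ll |A|^{3/4}|B|^{1/2}|L|^{3/4}$ (say $|A|\le|B|$) plus lower-order terms.

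The remaining work is to pass from this Cartesian estimate to the symmetric bound of Lemma \ref{frank} with exponent $11/15$. I would do this by a dyadic weighting argument that reduces an arbitrary point set to grid-like sub-configurations and then balances the exponents of $|P|$ and $|L|$, the constraint $|P|\le p^{8/5}$ serving to keep the auxiliary configuration inside Rudnev's admissible range $|Q|\ll p^2$ and in the regime where the main term $|P|^{11/15}|L|^{11/15}$ dominates the trivial terms $|P|+|L|$. I expect two places to be the real obstacles. The first is controlling the collinear-points error $k\,|\Pi|$ in Rudnev's bound: one must check, using the product structure, that not too many auxiliary points are collinear, since otherwise this term overwhelms the main term. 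The second is the bookkeeping in the reduction to a general point set, where the exponent $11/15$ emerges only after a careful optimization of the dyadic sum; fixing the thresholds and the exact range of validity is the delicate part, whereas the invocation of Rudnev's theorem itself is essentially a black box.
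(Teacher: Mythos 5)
This lemma is not proved in the paper at all: it is the Stevens--De Zeeuw incidence theorem, imported as a black box (cited here via \cite{lund}), so the only meaningful comparison is with the original proof in Stevens--De Zeeuw, and your proposal is essentially a faithful reconstruction of it. Their argument proceeds exactly along your lines: Rudnev's point-plane bound in $\mathbb{F}_p^3$ applied to the bilinear incidence relation $b=sa+t$ yields the Cartesian estimate $I(A\times B,L)\ll |A|^{3/4}|B|^{1/2}|L|^{3/4}$ (for $|A|\le |B|$, under a size condition on $p$ that is where the hypothesis $|P|\le p^{8/5}$ enters), and you correctly flag the collinearity term $k|\Pi|$ as the point needing the product structure. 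The one place your sketch is vaguer than the actual proof is the passage to general $P$: in Stevens--De Zeeuw this is not merely a dyadic weighting but a projective reduction using the two pencils of lines through a pair of points, which coordinatizes $P$ inside a grid $A\times B$ whose side lengths are controlled by line richness; the exponent $11/15$ then falls out of optimizing over the richness parameter. Since you identified the correct engine, the correct intermediate Cartesian bound with the right exponents, and the two genuine delicate points, the proposal is a sound outline of the standard (and, as far as is known, only) proof of this lemma.
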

\begin{lemma}\label{lm3}
Let $E$ be a set in $\mathbb{F}_p^2$ with $p\equiv 3\mod 4$ and $|E|\le p^{8/5}$. We have $|\Pi(E)|\gg |E|^{8/15}$. 
\end{lemma}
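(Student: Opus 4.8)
The plan is to deduce the bound from a uniform upper estimate on the fibres of the dot product. For $t\in\mathbb{F}_p$ set $r(t):=|\{(\mathbf{x},\mathbf{y})\in E\times E : \mathbf{x}\cdot\mathbf{y}=t\}|$, so that $\sum_{t}r(t)=|E|^2$ and $\Pi(E)=\{t : r(t)>0\}$. Since $\sum_t r(t)\le |\Pi(E)|\cdot\max_t r(t)$, a lower bound on $|\Pi(E)|$ follows from an upper bound on $\max_t r(t)$. I would first peel off the exceptional value $t=0$ and reduce matters to the two assertions $r(t)\ll |E|^{22/15}$ for every $t\ne 0$ and $\sum_{t\ne 0}r(t)\gg |E|^2$; granting these, $|\Pi(E)|\ge \left(\sum_{t\ne 0}r(t)\right)/\max_{t\ne 0}r(t)\gg |E|^2/|E|^{22/15}=|E|^{8/15}$, which is exactly the claim, with the exponent $\tfrac{8}{15}=2-\tfrac{22}{15}$ dropping out automatically.

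The key step is to read $r(t)$, for fixed $t\ne 0$, as a point--line incidence count to which Lemma \ref{frank} applies. For each $\mathbf{y}\in E$ the fibre $\ell_{\mathbf{y},t}=\{\mathbf{x} : \mathbf{x}\cdot\mathbf{y}=t\}$ is an affine line in $\mathbb{F}_p^2$, and because $t\ne 0$ the vector $\mathbf{y}$ can be recovered from the line (projective duality), so distinct $\mathbf{y}$ give distinct lines. Hence $r(t)=\sum_{\mathbf{y}\in E}|E\cap \ell_{\mathbf{y},t}|=I(E,L_t)$, where $L_t=\{\ell_{\mathbf{y},t}:\mathbf{y}\in E\}$ is a family of at most $|E|$ lines and $E$ is a set of at most $|E|$ points. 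The hypothesis $|E|\le p^{8/5}$ is precisely what is needed to invoke Lemma \ref{frank}, and since here $|L_t|\le|E|$ the line count is comfortably within range; the incidence bound then gives $r(t)\ll |E|^{11/15}|E|^{11/15}+|E|+|E|\ll |E|^{22/15}$, using $22/15>1$. This furnishes the uniform estimate required above.

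The step I expect to be the real obstacle, and the only place the congruence $p\equiv 3\pmod 4$ enters, is the control of the degenerate value $t=0$. The lines $\ell_{\mathbf{y},0}$ all pass through the origin and form a concurrent pencil, for which Lemma \ref{frank} is powerless, so $r(0)$ must be handled by hand. When $r(0)\le\tfrac12|E|^2$ the reduction above goes through unchanged. The dangerous regime is when $r(0)$ is comparable to $|E|^2$, i.e. a positive proportion of pairs of $E$ are orthogonal; here $p\equiv 3\pmod 4$ is essential, as it forces $x_1^2+x_2^2$ to have no nonzero zero, so that a direction and its perpendicular are always distinct and $E$ must then concentrate on a few lines through the origin together with their perpendiculars. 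On such a line the dot product degenerates to the ordinary product of coordinates, and I would close this case with a product-set (sum--product type) estimate, which produces far more than $|E|^{8/15}$ distinct values and so is never the bottleneck. Packaging this dichotomy cleanly, and checking that the additive error terms in Lemma \ref{frank} and the contributions of $\mathbf{x}=\mathbf{0}$ or $\mathbf{y}=\mathbf{0}$ are negligible, is the remaining routine bookkeeping.
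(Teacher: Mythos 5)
Your proposal is correct, but it takes a genuinely different route from the paper. Both arguments ultimately rest on the same two ingredients --- the Stevens--De Zeeuw bound (Lemma \ref{frank}) and the absence of isotropic lines when $p\equiv 3 \bmod 4$ --- but they are deployed differently. You bound each fibre $r(t)$, $t\ne 0$, by one dual incidence count: the lines $\ell_{\mathbf{y},t}$ are pairwise distinct for $t\ne 0$, so $r(t)=I(E,L_t)\ll |E|^{22/15}$, and pigeonholing $\sum_{t\ne 0} r(t)\ge |E|^2/2$ gives $|\Pi(E)|\gg |E|^{8/15}$ with no free parameter to optimize. The paper instead proves the stronger \emph{pinned} statement $\max_{a\in E}|\Pi_a(E)|\gg |E|^{8/15}$: it runs a dichotomy on the maximal number $m$ of points of $E$ on a line through the origin, and in the spread case builds, from one representative point per direction, a disjoint union of line families $L=\cup_{a\in P}L_a$ with $|L|\ll |E|t/m$ and $I(E,L)\gg |E|^2/m$, applies Lemma \ref{frank} once to get $t\gg |E|^{8/11}m^{-4/11}$, and optimizes at $m=|E|^{8/15}$. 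So your version is simpler and avoids the optimization, while the paper's buys the pinned strengthening (a single element $a$ already realizes $\gg|E|^{8/15}$ dot products). One remark on your degenerate case: it closes more easily than your sketch suggests, and no sum--product input is needed. Writing $n_\ell$ for the number of points of $E\setminus\{\mathbf{0}\}$ on a line $\ell$ through the origin, non-isotropy gives $r(0)\le \sum_\ell n_\ell n_{\ell^\perp}+O(|E|)\le |E|\max_\ell n_\ell+O(|E|)$, so $r(0)\ge |E|^2/2$ forces a single line $\ell_0$ with $\gg |E|$ points; since its direction $\mathbf{u}$ satisfies $\mathbf{u}\cdot\mathbf{u}\ne 0$, fixing one point of $E\cap\ell_0$ and pairing it with the others already produces $\gg |E|$ distinct dot products. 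This is exactly the paper's Case 1 in disguise, and it confirms your diagnosis that the congruence condition enters only through the concurrent pencil at the origin, which Lemma \ref{frank} (stated for simple line sets, while your $t=0$ fibres form a multiset) cannot control.
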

\begin{proof}
Since $p\equiv 3\mod 4$, there is no isotropic line in $\mathbb{F}_p^2$. For each $a\in E$, we denote the set $\{a\cdot b\colon b\in E\}$ by $\Pi_a(E)$. Suppose that 
\[\max_{a\in E}|\Pi_a(E)|=t.\]
It is clear that $|\Pi(E)|\gg \max_{a\in E}|\Pi_a(E)|$.

Without loss of generality, we may assume that $0\notin E.$ We now fall into two following cases:

{\bf Case $1$:} If there is a line passing through the origin with at least $m$ points of $E$, then it is not hard to check that $|\Pi(E)|\gg m$. 

{\bf Case $2$:} Suppose that all lines passing through the origin contain at most $m$ points of $E.$ This implies that the number of lines passing through the origin and a point in $E$ is at least $|E|/m$.

Let $L_0$ be a set of lines passing through the origin and at least one point from $E$ such that $|L_0|\sim |E|/m$. From each line $l$ in $L_0$, we pick one point in $l\cap E$ arbitrary, and let $P$ be the set of those points. So $|P|=|L_0|$. 

For any point $a=(a_1, a_2)\in E$, let $L_a$ be the set of lines defined by the equation $a_1x+a_2y=r$ with $r\in \Pi_a(E)$. One can check that the size of $L_a$ is the same as the size of $\Pi_a(E)$. More  over, one can check that $L_a=L_b$ when both $a$ and $b$ lie on a line in $L_0$, and $L_a\cap L_b=\emptyset$ when the $a$ and $b$ are distinct elements of $P.$ 

Let $L=\cup_{a\in P}L_a$. Since $|\Pi_a(E)|\le t$ for any $a\in E$, we have $|L_a|\le t$ for all $ a\in E$. Thus $|L|\le |P|t=|L_0|t\sim|E|t/m$. 

Let $I(E, L)$ be the number of incidences between $E$ and $L$. For each $a\in P$, we have $I(E, L_a)=|E|$. Thus, 
\[I(E, L)\gg |E|^2/m.\]

On the other hand, it follows from Lemma \ref{frank} that 
\[I(E, L)\ll |E|^{11/15}(|E|t/m)^{11/15}+|E|+|E|t/m.\]

Hence, we have
$$ |E|^2/m \ll |E|^{11/15}(|E|t/m)^{11/15}+|E|+|E|t/m.$$
Since $|E|^2/m \gg |E|+|E|t/m,$ 
solving this inequality for $t$, we obtain $t\gg |E|^{8/11}m^{-4/11}$.

Optimizing two cases by choosing $m=|E|^{8/15},$ the lemma follows. 
\end{proof}
\paragraph{Proof of Theorem \ref{thm2}:}
We first observe that 
\[|[E, E, 0][E, E, 0]|\gg |\Pi(E)||E|^2.\]
It follows from Lemma \ref{lm3} that if $|E|\le p^{8/15}$ then we have 
\[|\Pi(E)|\gg |E|^{8/15}.\]
Therefore, 
\[|[E, E, 0][E, E, 0]|\gg |\Pi(E)||E|^2\gg |E|^{38/15},\]
whenever $|E|\ll p^{8/15}$. This completes the proof of the theorem. $\hfill\square$. 
\section{Proof of Theorem \ref{thm3}}
In the proof of Theorem \ref{thm3}, the following results will be used. 
\bigskip
\begin{lemma}
Let $A$ be a multiplicative subgroup of $\mathbb{F}_p^*$ with $|A|\lesssim p^{1/2}$. Let $L$ be a set of lines in $\mathbb{F}_p^2$, and $I(A\times A, L)$ be the number of incidences between $A\times A$ and $L$. We have 
\[I(A\times A, L)\lesssim |A|^{4/3}|L|^{2/3}.\]
\end{lemma}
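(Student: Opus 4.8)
The plan is to deduce the bound from a sharp count of collinear triples in the grid $A\times A$. Write $r_\ell=|\ell\cap(A\times A)|$ and $I=I(A\times A,L)=\sum_{\ell\in L}r_\ell$. Since a line meets $A\times A$ in at most $|A|$ points, I would split $L$ into the $O(\log|A|)$ dyadic families $L_\tau=\{\ell:\tau\le r_\ell<2\tau\}$ and estimate $\tau|L_\tau|$ for each $\tau$. Every line carrying $\tau\ge 3$ points of $A\times A$ supports $\gg\tau^3$ ordered collinear triples, and triples coming from distinct lines are distinct, so $|L_\tau|\tau^3\ll T$, where $T$ denotes the total number of ordered collinear triples of $A\times A$ (the contribution of lines with $r_\ell\le 2$ is trivially $\le 2|L|$). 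Combining the two bounds $\tau|L_\tau|\le\tau|L|$ and $\tau|L_\tau|\ll T/\tau^2$ and summing the resulting geometric series around the balance point $\tau=(T/|L|)^{1/3}$ yields
\[
I(A\times A,L)\ \lesssim\ T^{1/3}|L|^{2/3}+|L|.
\]
Thus the whole lemma reduces to the single estimate $T\lesssim|A|^4$, after which the stated bound follows in the regime $|L|\lesssim|A|^4$ where the main term dominates.

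To prove $T\lesssim|A|^4$ I would first isolate the structured rich lines, which is exactly where the multiplicative (rather than merely Cartesian) structure of $A$ enters. The vertical and horizontal lines each carry $|A|$ points and there are $|A|$ of each; and -- crucially for a subgroup -- every line $y=sx$ through the origin with $s\in A$ also carries the full set $\{(x,sx):x\in A\}$, since $sx\in A$ whenever $x\in A$, and there are $|A|$ such slopes. Each of these three families contributes $\sim|A|^4$ ordered collinear triples, which is the source of the claimed main term. For all remaining (generic) triples I would pass to the collinearity relation $x_1(y_2-y_3)+x_2(y_3-y_1)+x_3(y_1-y_2)=0$ and read it as a point--plane incidence in $\mathbb{F}_p^3$, to which Rudnev's point--plane incidence theorem applies. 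Here the hypothesis $|A|\lesssim p^{1/2}$ is used precisely to guarantee that the statistical main term, of order $|A|^6/p$, does not exceed $|A|^4$, while the subgroup structure, in the form of sharp bounds on the additive correlations $\#\{x\in A:sx+t\in A\}$ for $t\ne0$, is used to limit the richness of the non-special lines.

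The main obstacle is the generic collinear--triple estimate. A naive third--moment argument, bounding $\sum_{\ell}r_\ell^3$ by $(\max_\ell r_\ell)\sum_\ell r_\ell^2\ll(\max_\ell r_\ell)\,|A|^4$, exceeds the target $|A|^4$ by a power of $|A|$ even after the richness $\max_\ell r_\ell$ is controlled by the subgroup, so it is essential to exploit the arithmetic of the subgroup rather than merely the Cartesian shape of the point set, and to feed the problem into Rudnev's theorem in a form in which the $|A|^6/p$ main term is visible and the degenerate incidences (the pencil of planes through the common direction $(1,1,1)$, matching the axis--parallel and origin lines on the geometric side) are separated out. Carrying out this separation and checking that the exceptional and generic contributions both remain at the level $|A|^4$ throughout the range $|A|\lesssim p^{1/2}$, while absorbing the unavoidable logarithmic losses into the $\lesssim$ notation, is the delicate heart of the argument; once it is in place, the dyadic optimisation above delivers $I(A\times A,L)\lesssim|A|^{4/3}|L|^{2/3}$, improving on the general bound of Lemma \ref{frank} for this special point set.
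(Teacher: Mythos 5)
Your reduction of the incidence count to a collinear-triples estimate is essentially the paper's own first step, done with more machinery than necessary: the paper simply applies H\"older's inequality, $I(A\times A,L)=\sum_{l\in L}i(l)\le |L|^{2/3}\bigl(\sum_{l\in L}i(l)^3\bigr)^{1/3}\le |L|^{2/3}\,T(A)^{1/3}$, which gives your dyadic conclusion in one line and without your side restriction to the regime $|L|\lesssim |A|^4$. The genuine gap is in what you do next. The paper does \emph{not} prove $T(A)\lesssim |A|^4$; it imports it as a black box from \cite[Theorem 1.2]{mac} (Macourt--Shkredov--Shparlinski), and that imported estimate is the entire substance of the lemma. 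Your plan to rederive it from Rudnev's point--plane theorem plus richness bounds for shifted subgroups does not close as sketched. Rudnev's theorem applied to the collinearity relation for a general Cartesian product yields only $T\ll |A|^6/p+|A|^{9/2}$, and the second term exceeds the target by a factor $|A|^{1/2}$ even at the endpoint $|A|\sim p^{1/2}$; the subgroup structure must therefore do real work. But the off-the-shelf subgroup input you invoke, the pointwise Stepanov/Heath-Brown--Konyagin bound $r_{s,t}=\#\{x\in A\colon sx+t\in A\}\ll |A|^{2/3}$ for $t\ne 0$, combined with the second moment $\sum_{s,t}r_{s,t}^2\ll |A|^4$, gives only $\sum_{s,t\ne 0} r_{s,t}^3\ll |A|^{14/3}$ --- still a positive power of $|A|$ above $|A|^4$. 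Closing that gap requires distributional, not pointwise, control of the rich lines, i.e.\ the Stepanov-type bounds on multiplicative energies of \emph{shifted} subgroups that form the actual proof in \cite{mac}; this is precisely the step you label ``the delicate heart of the argument'' and then defer. So your write-up establishes the easy reduction (correctly, including the sharpness examples: axis-parallel lines and the lines $y=sx$, $s\in A$, each contribute $\sim |A|^4$ triples) but leaves the lemma's essential input unproved, and the route you indicate toward it is not strong enough in the form stated.

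The efficient fix is to do what the paper does: cite \cite[Theorem 1.2]{mac} for $T(A)\lesssim |A|^4$ when $|A|\lesssim p^{1/2}$ and finish with H\"older. If you insist on a self-contained argument, be aware you would be reproving that theorem, which is a substantial piece of work in its own right and not a routine application of Rudnev's incidence bound.
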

\begin{proof}
Let $T(A)$ be the number of collinear triples of points in $A\times A$. It has been shown in \cite[Theorem 1.2]{mac} that if $|A|\lesssim p^{1/2}$, then we have 
\[T(A)\lesssim |A|^4.\]

For any $l\in L$, let $i(l)$ be the number of points of $A\times A$ on $l$. We have 
\[I(A\times A, L)=\sum_{l\in L}i(l)\le |L|^{2/3}\left(\sum_{l\in L}i(l)^3\right)^{1/3}=|L|^{2/3}T(A)^{1/3},\]
where we used the Cauchy-Schwarz inequality in the inequality step. 

Since $T(A)\lesssim |A|^4$, the lemma follows. 
\end{proof}
The following theorem is given in \cite[Theorem $3$]{s}. 
\bigskip
\begin{theorem}\label{ss}
Let $A$ be a multiplicative subgroup of $\mathbb{F}_p^*$. Suppose that $|A|\le p^{1/2}$, then we have 
\[E^+(A)\lesssim |A|^{49/20}.\]
\end{theorem}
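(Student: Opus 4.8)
The statement is a purely additive-combinatorial bound on the additive energy $E^+(A)=\sum_{s}r_{A-A}(s)^2$ of a multiplicative subgroup, where $r_{A-A}(s)=|\{(a,b)\in A^2:a-b=s\}|$; it carries no Heisenberg content and will simply be fed into the proof of Theorem~\ref{thm3}. The single structural fact driving everything is that $A$ is multiplicatively closed: since $\gamma A=A$ for every $\gamma\in A$, the representation function is dilation invariant, $r_{A-A}(\gamma s)=r_{A-A}(s)$, so $r_{A-A}$ is constant on every multiplicative coset $\gamma A$ and each level set $\{s:r_{A-A}(s)\sim\Delta\}$ is a union of full cosets of $A$. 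The plan is to leverage this invariance to convert the energy into a planar point--line incidence count that can be attacked with the strong subgroup incidence bound $I(A\times A,L)\lesssim|A|^{4/3}|L|^{2/3}$, which holds because $|A|\lesssim p^{1/2}$ (via the collinear-triples estimate $T(A)\lesssim|A|^4$ of \cite{mac}).

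The key reduction I would use is the identity obtained by averaging over dilations. Counting the solutions of $a-\lambda c=b-\lambda d$ with $a,b,c,d,\lambda\in A$ and using $\lambda A=A$ twice shows that this number equals $|A|\,E^+(A)$; on the other hand, reading $(a,c)$ as a point of $A\times A$ and $x-\lambda y=b-\lambda d$ as a line of slope $\lambda$, the same count equals $\sum_{\ell\in L}i(\ell)^2$, where $L$ is the family of lines whose slopes range over $A$ and $i(\ell)=|\ell\cap(A\times A)|$. Thus $|A|\,E^+(A)=\sum_{\ell\in L}i(\ell)^2$ with $|L|\le|A|\,|A-A|$, and the problem becomes that of bounding a second moment of incidences for a line family whose slope set is exactly the subgroup $A$. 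This is precisely the configuration in which the multiplicative structure produces many rich lines, so the incidence input is not wasteful.

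To estimate $\sum_{\ell\in L}i(\ell)^2$ I would split the lines dyadically according to $i(\ell)\sim\Delta$. For each level, the number $N_\Delta$ of $\Delta$-rich lines is controlled in two ways: by the first-moment bound $\Delta\,N_\Delta\le I(A\times A,L)\lesssim|A|^{4/3}|L|^{2/3}$, and by the third-moment bound $\Delta^3 N_\Delta\le\sum_{\ell}i(\ell)^3\le T(A)\lesssim|A|^4$. Because the rich intercepts form whole cosets of $A$, the cardinality $|L|$ can itself be kept proportional to $|A|$ times the number of rich cosets, which couples $N_\Delta$ back to $E^+(A)$. Feeding these estimates into $\sum_\ell i(\ell)^2\approx\sum_{\Delta\ \mathrm{dyadic}}\Delta^2 N_\Delta$ and optimizing the balance between the two incidence inputs is what converts $|A|\,E^+(A)=\sum_\ell i(\ell)^2$ into $E^+(A)\lesssim|A|^{49/20}$.

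The main obstacle is exactly this optimization. The crude interpolation $\sum_\ell i(\ell)^2\le(\sum_\ell i(\ell))^{1/2}(\sum_\ell i(\ell)^3)^{1/2}$ between the first- and third-moment bounds only yields the weaker estimate $E^+(A)\lesssim|A|^{8/3}$; squeezing the exponent down to $49/20$ forces one to use the collinear-triples bound $T(A)\lesssim|A|^4$ at full strength together with the coset structure of the rich differences, so that in each dyadic class the rich-line family is controlled simultaneously in cardinality and in richness. Balancing these competing constraints without conceding powers of $|A|$ is the delicate point; the hypothesis $|A|\le p^{1/2}$ enters only through the validity of $T(A)\lesssim|A|^4$, which underlies every incidence estimate used above.
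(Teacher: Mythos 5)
The first thing to note is that the paper contains no proof of Theorem \ref{ss} at all: it is imported verbatim from \cite[Theorem 3]{s} (Murphy--Rudnev--Shkredov--Shteinikov), so the honest treatment of this statement is a citation, and your attempt must be measured against that paper's argument. Your reduction is correct as far as it goes: the dilation invariance $r_{A-A}(\gamma s)=r_{A-A}(s)$ and the identity $|A|\,E^+(A)=\sum_{\ell\in L}i(\ell)^2$ are fine, and from the collinear-triples bound $T(A)\lesssim|A|^4$ of \cite{mac} one gets, for the family of $\Delta$-rich lines, $|A|\,D_\Delta=N_\Delta\lesssim|A|^4/\Delta^3$, where $D_\Delta$ is the number of $\Delta$-popular differences (each popular difference contributes $|A|$ lines, one per slope). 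Executed carefully this is actually better than the $8/3$ you concede: combining $D_\Delta\lesssim\min\{|A|^2/\Delta,\ |A|^3/\Delta^3\}$ in the dyadic sum gives $E^+(A)\lesssim|A|^{5/2}$, the classical Heath-Brown--Konyagin exponent.

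But that is provably where your toolkit stops, and $5/2>49/20$. The two moment bounds you propose to balance are not independent constraints: the incidence bound $I(A\times A,L)\lesssim|A|^{4/3}|L|^{2/3}$ is itself derived from $T(A)\lesssim|A|^4$ by H\"older (exactly as in the lemma preceding Theorem \ref{ss} in this paper), so everything you have reduces to $D_\Delta\lesssim|A|^3/\Delta^3$ together with the mass identity $\sum_s r_{A-A}(s)=|A|^2$. The configuration $\Delta\sim|A|^{1/2}$, $D_\Delta\sim|A|^{3/2}$ saturates both constraints simultaneously, is fully compatible with the coset structure you invoke ($D_\Delta$ being $|A|^{1/2}$ full cosets of $A$), and yields $E^+(A)\sim|A|^{5/2}$; hence no optimization over dyadic classes, and no appeal to coset invariance, can push below $5/2$ from these inputs. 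The descent from $5/2$ (through Shkredov's earlier $32/13$) to $49/20$ is precisely the content of \cite{s}, and it uses genuinely different machinery --- the operator/eigenvalue method and higher-energy ($E_3$-type) estimates for multiplicative subgroups --- which your sketch defers to an unspecified ``delicate point'' without supplying it. As written, your proposal proves at best $E^+(A)\lesssim|A|^{5/2}$, which, fed into the proof of Theorem \ref{thm3}, would yield only $|[A,A,0][A,A,0]|\gtrsim|[A,A,0]|^{15/8}$ in place of the exponent $151/80$.
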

We are now ready to prove Theorem \ref{thm3}.
 \paragraph{Proof of Theorem \ref{thm3}:} We first repeat the first paragraph in the proof of Theorem \ref{thm0}. 
 
 Let $S$ be the number of quadruples of matrices $(m_1, m_2, m_3, m_4)$ in $[A, A, 0]^4$ such that $m_1m_2=m_3m_4$. By the Cauchy-Schwarz inequality, we have 
\[[|A,A, 0][A, A, 0]|\ge \frac{|A|^8}{S}.\]
Thus, to complete the proof, we only to show that 
\[S\lesssim  |A|^4+|A|^{\frac{169}{40}}.\]
Indeed, it is not hard to check that $S$ is equal to the number of tuples $(a, b, c, d, a', b', c', d')$ in $A^8$ such that 
\begin{align}
\label{Nsystem111} &a+c=a'+c',\\
\label{Nsystem211} &b+d=b'+d'\\
\label{Nsystem311} &ad=a'd'
\end{align}
It follows from (\ref{Nsystem111}) and (\ref{Nsystem211}) that $a=a'+c'-c$ and $d'=b+d-b'$. Substituting into (\ref{Nsystem311}), we obtain 
\[(a'+c'-c)\cdot d=a'\cdot (b+d-b').\]
This implies that 
\begin{equation}\label{mat11}d(c'-c)=a'(b-b').\end{equation}
This is equivalent with 
\[a'=\frac{d}{b-b'}(c'-c).\]
It follows from (\ref{mat11}) that if $b=b'$ then $c=c'$. We note that the number of tuples $(a', b, b', c, c', d)\in A^6$ with $b=b'$ and $c=c'$ is at most $|A|^4$.  We now count the number of tuples with $b\ne b'$ and $c\ne c'$. We have the number of tuples $(a, b, c, d, a', b', c', d')\in A^8$ satisfying (\ref{Nsystem111}-\ref{Nsystem311}) is at most the number of tuples $(a', c, c', b, b', d)\in A^6$ with 
\[a'=\frac{d}{b-b'}(c'-c),\]
where $a', c'\in A$ and $b+d-b'\in A$. Let $X$ be the number of such tuples. So, $S\le X+|A|^4$.

On the other hand, $X$ is bounded by the number of incidences between the point set $P=A\times A$ and the multi-set $L$ of lines of the from $y=\frac{d}{b-b'}(x-c)$ with $b+d-b'\in A$. It is clear that $|P|=|A|^2$ and $|L|=E^+(A)|A|$. For any line $l\in L$, let  $m(l)$ be the multiplicity of $l$. By an elementary calculation, we have
\[\sum_{l\in \overline{L}}m(l)^2\le X|A|.\]

Let $L_k$ be the set of lines $l\in L$ (without multiplicity) with $k\le m(l)\le 2k$. For any $k$, we have 
\[k|L_k|\le |L|=E^+(A)|A|, ~k^2|L_k|\le \sum_{l\in \overline{L}}m(l)^2\le X|A|.\]
We have 
\begin{align*}
I(P, L)&\le \sum_{\mathtt{dyadic} ~k}2k\cdot I(P, L_k)=\sum_{\mathtt{dyadic} ~k}2k\cdot I(P, L_k)\\
&\le \sum_{\mathtt{dyadic} ~k \le \frac{X}{E^+(A)}}2k\cdot I(P, L_k)+\sum_{\mathtt{dyadic} ~k \ge \frac{X}{E^+(A)}}2k\cdot I(P, L_k)\\
&\lesssim \sum_{\mathtt{dyadic} ~k \le \frac{X}{E^+(A)}}2k\cdot |A|^{4/3}\left(\frac{E^+(A)|A|}{k}\right)^{2/3}+\sum_{\mathtt{dyadic} ~k \ge \frac{X}{E^+(A)}}2k\cdot |A|^{4/3}\left(\frac{X|A|}{k^2}\right)^{2/3}\\
&\lesssim |A|^2X^{1/3}E^+(A)^{1/3}.
\end{align*}
In other words, we have proved that 
\[X\lesssim |A|^2X^{1/3}E^+(A)^{1/3}, \]
which implies that $X\lesssim |A|^3E^+(A)^{1/2}$. 
Applying Theorem \ref{ss}, we have 
\[X\lesssim |A|^{\frac{169}{40}},\]
whenever $|A|\lesssim p^{1/2}$. This completes the proof of the theorem. $\hfill\square$

\section{Proof of Theorem \ref{thm5}}
The proof of Theorem \ref{thm5} is quite similar compared to that of Theorem \ref{thm3}. More precisely, we will need the following point-line incidence bound over the complex numbers due to 
 T\'{o}th in \cite{toth}. 
\begin{theorem}[\cite{toth}]\label{incidence-C}
Let $P$ be a set of points in $\mathbb{C}^2$ and $L$ be a set of lines in $\mathbb{C}^2$. The number of incidences between $P$ and $L$, denoted by $I(P, L)$, satisfies
\[I(P, L)\ll |P|^{2/3}|L|^{2/3}+|P|+|L|.\]
\end{theorem}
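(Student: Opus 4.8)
The statement is the Szemer\'edi--Trotter theorem over $\mathbb{C}$, and the plan is to prove it by polynomial partitioning after identifying $\mathbb{C}^2$ with $\mathbb{R}^4$, under which each complex line becomes a real affine $2$-flat while two distinct complex lines still meet in at most one point and two distinct points still lie on at most one common line. Because of this, the bipartite point--line incidence graph contains no $K_{2,2}$, so the K\H{o}v\'ari--S\'os--Tur\'an estimate gives both $I(P,L)\ll |P|\,|L|^{1/2}+|L|$ and $I(P,L)\ll |L|\,|P|^{1/2}+|P|$. First I would use these to dispose of the unbalanced ranges: if $|L|\le|P|^{1/2}$ the second bound forces $I(P,L)\ll|P|$, and if $|L|\ge|P|^2$ the first forces $I(P,L)\ll|L|$. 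Thus I may assume $|P|^{1/2}\le|L|\le|P|^2$, and it remains only to produce the term $|P|^{2/3}|L|^{2/3}$.

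In this balanced range I would apply the Guth--Katz partitioning theorem to $P\subset\mathbb{R}^4$ with a degree parameter $D$, obtaining a real polynomial $f$ of degree $O(D)$ whose zero set $Z(f)$ leaves $O(D^4)$ open cells, each containing $O(|P|/D^4)$ points. The key geometric observation is that a $2$-flat not contained in $Z(f)$ meets $Z(f)$ in a real algebraic curve of degree $O(D)$, hence by the planar cell bound enters only $O(D^2)$ cells; so the number of incident (cell, line) pairs is $O(|L|D^2)$. Applying the $K_{2,2}$-free bound inside each cell and summing by Cauchy--Schwarz, using $\max_c|P_c|\le|P|/D^4$ and $\sum_c|L_c|\ll|L|D^2$, I expect a cell contribution $\ll|P|\,|L|^{1/2}D^{-1}+|L|D^2$. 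Choosing $D\sim|P|^{1/3}|L|^{-1/6}$ balances these at $|P|^{2/3}|L|^{2/3}$; one checks that this $D$ satisfies $1\le D$ and $D^4\le|P|$ throughout the balanced range, and that the transversal incidences on the variety (the $O(D|L|)$ pairs coming from lines meeting $Z(f)$ in at most $O(D)$ points) are also $\ll|P|^{2/3}|L|^{2/3}$ there.

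The hard part will be the incidences supported on $Z(f)$ by $2$-flats that lie entirely inside it, and this is precisely what makes the complex statement strictly harder than its planar real analogue. In the real plane a line lies in a degree-$D$ curve only as a linear factor, so there are at most $D$ such lines and one simply recurses; but in $\mathbb{R}^4$ a bounded-degree hypersurface can be ruled and contain a two-parameter family of $2$-flats, so no cheap count is available. Here I would have to exploit that the flats in question are genuinely \emph{complex} lines rather than arbitrary real $2$-flats: one option is to slice $Z(f)$ by a generic complex line to bound how many of the contained lines pass through a typical point and thereby recover a $K_{2,2}$-free sub-configuration with a usable count, another is a secondary partitioning performed within $Z(f)$, after which I would close the whole estimate by induction on $|P|$. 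Combining the cell term, the transversal term $O(D|L|)$, and this controlled on-variety contribution then gives $I(P,L)\ll|P|^{2/3}|L|^{2/3}+|P|+|L|$. Tentatively I note that T\'oth's own argument sidesteps partitioning, instead proving a cutting lemma that yields only $O(r^2)$ cells for $n$ complex lines --- reflecting that complex lines behave like a planar family of pseudolines --- and that cutting lemma is exactly the analogue of the obstacle just described.
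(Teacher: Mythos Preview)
The paper does not prove this theorem at all: it is quoted verbatim from T\'oth \cite{toth} and used as a black box, so there is no ``paper's own proof'' to compare against. What you have written is therefore not a comparison target but an independent attempt at T\'oth's result.

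As an outline, your cell-decomposition part is sound. The count $\sum_c |P_c||L_c|^{1/2}\ll |P||L|^{1/2}D^{-1}$ is correct (via Cauchy--Schwarz using $O(D^4)$ cells and $\sum_c|L_c|\ll |L|D^2$), and your choice $D\sim|P|^{1/3}|L|^{-1/6}$ does balance the two terms at $|P|^{2/3}|L|^{2/3}$.

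The genuine gap is exactly where you flag it: the incidences with complex lines contained in $Z(f)$. You list two possible strategies --- slicing to recover a $K_{2,2}$-style count, or a secondary partitioning on $Z(f)$ --- but you do not commit to or execute either, and both are substantial. A degree-$D$ real hypersurface in $\mathbb{R}^4$ can contain far more than $O(D)$ real $2$-flats, and the fact that your flats come from complex lines is precisely what you must exploit; doing so rigorously (for instance by bounding how many complex lines through a generic point can lie in a bounded-degree variety, or by Zahl/Solymosi--Tao style arguments) is essentially the whole difficulty of the theorem. As written, the proposal is a correct plan for the easy half and an acknowledgment of the hard half, not a proof. Your closing remark is accurate: T\'oth avoids partitioning altogether and proves a complex cutting lemma with $O(r^2)$ cells, which is where the real work in \cite{toth} lies.
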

\begin{corollary}[\cite{toth}]\label{diemgiau}
Let $P$ be a set of points in $\mathbb{C}^2$. For any integer $t\ge 2$, the number of lines containing at least $t$ points from $P$ is bounded by 
\[O\left(\frac{|P|^2}{t^3}+\frac{|P|}{t}\right).\]
\end{corollary}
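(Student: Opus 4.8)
The plan is to obtain the corollary as the standard ``rich lines'' consequence of the complex Szemer\'edi--Trotter bound in Theorem \ref{incidence-C}. First I would fix an integer $t \ge 2$ and write $L_t$ for the collection of lines incident to at least $t$ points of $P$, so that the goal is to bound $|L_t|$. The elementary but crucial input is a lower bound for the incidence count: since every line of $L_t$ carries at least $t$ points of $P$, summing over these lines gives
\[
I(P, L_t) \ge t\,|L_t|.
\]

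Next I would feed this into Theorem \ref{incidence-C} applied to the pair $(P, L_t)$, which yields $I(P, L_t) \ll |P|^{2/3}|L_t|^{2/3} + |P| + |L_t|$, and hence
\[
t\,|L_t| \ll |P|^{2/3}|L_t|^{2/3} + |P| + |L_t|.
\]
The only point needing care is the linear term $|L_t|$ on the right, which I would remove by splitting on the size of $t$. Letting $C$ denote the implicit constant of Theorem \ref{incidence-C}, for $t \ge 2C$ one has $C|L_t| \le \tfrac{t}{2}|L_t|$, so that term is absorbed into the left side and I am left with $\tfrac{t}{2}|L_t| \ll |P|^{2/3}|L_t|^{2/3} + |P|$. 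Comparing the two surviving terms on the right then gives $|L_t| \ll |P|^2/t^3$ when the first dominates and $|L_t| \ll |P|/t$ when the second does, which is precisely the asserted bound. In the complementary range $2 \le t < 2C$ the parameter $t$ is bounded by an absolute constant, so the crude estimate $|L_t| \le \binom{|P|}{2} \ll |P|^2 \sim |P|^2/t^3$ already closes the argument.

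I do not anticipate a real obstacle, as this is a routine deduction from the incidence theorem; the only mildly delicate step is the absorption of the linear term $|L_t|$, handled by the case distinction above. It is worth noting that the same scheme is field-agnostic: run verbatim with the classical Szemer\'edi--Trotter theorem it proves the real analogue, and run with the point-line bound of Lemma \ref{frank} it gives the corresponding statement over $\mathbb{F}_p$ within that lemma's range $|P| \le p^{8/5}$. Thus the only genuinely complex-analytic ingredient being used is T\'oth's extension of Szemer\'edi--Trotter to $\mathbb{C}^2$.
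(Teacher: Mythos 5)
Your proposal is correct: the paper states this corollary without proof, citing \cite{toth}, and your argument is precisely the standard deduction one would expect, namely bounding $I(P,L_t)\ge t|L_t|$ below and applying Theorem \ref{incidence-C}, with the linear term $|L_t|$ absorbed for $t\gg 1$ and the range of bounded $t$ handled by the trivial bound $|L_t|\le\binom{|P|}{2}$. All steps check out, including the absorption threshold $t\ge 2C$ and the observation that for bounded $t$ one has $|P|^2\sim |P|^2/t^3$, so nothing further is needed.
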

Using these results, we have the following corollary. 
\begin{corollary}\label{cococo}
Let $A$ be a set in $\mathbb{C}$. Let $T(A)$ be the number of collinear points in $A\times A$, we have 
\[T(A)\lesssim |A|^4.\]
\end{corollary}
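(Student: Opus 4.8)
The plan is to bound the number of collinear triples $T(A)$ in the point set $P=A\times A\subset\mathbb{C}^2$ by a dyadic decomposition over line richness, exactly paralleling the incidence-counting argument used in the proof of Theorem \ref{thm3}. I would start by writing
\[
T(A)=\sum_{l}\binom{i(l)}{3}\lesssim \sum_{l\colon i(l)\ge 2}i(l)^3,
\]
where $i(l)$ denotes the number of points of $P$ lying on the line $l$, since a line with fewer than two points contributes nothing and for $i(l)\ge 2$ we have $\binom{i(l)}{3}\ll i(l)^3$.

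Next I would split the lines into dyadic classes $L_t=\{l\colon t\le i(l)<2t\}$ for $t$ a power of $2$ with $2\le t\le |A|$ (no line can contain more than $|A|$ points of a grid $A\times A$, up to a constant, since a non-vertical line meets each of the $|A|$ vertical fibers at most once). For each class, Corollary \ref{diemgiau} gives the count $|L_t|\ll |P|^2/t^3+|P|/t = |A|^4/t^3+|A|^2/t$. The contribution of $L_t$ to $\sum i(l)^3$ is at most $(2t)^3|L_t|\ll t^3(|A|^4/t^3+|A|^2/t)=|A|^4+t^2|A|^2$. Summing the first term over the $O(\log|A|)$ dyadic scales contributes $\lesssim |A|^4$, while the second term $t^2|A|^2$ is maximized at the top scale $t\sim|A|$, giving $|A|^4$. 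Hence $T(A)\lesssim |A|^4$ as claimed, with the $\lesssim$ absorbing the logarithmic factor from the number of dyadic levels.

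I expect the only real point requiring care is the geometric fact that a single line contains $O(|A|)$ points of $A\times A$, which caps the dyadic range and guarantees the top-scale term is the dominant one in the $t^2|A|^2$ sum; this is what keeps the bound at $|A|^4$ rather than something larger. Everything else is a routine dyadic summation against the rich-lines estimate of Corollary \ref{diemgiau}, which is itself a standard consequence of the Szemer\'edi–Trotter-type incidence bound of Theorem \ref{incidence-C}. An alternative, essentially equivalent route would be to bound $T(A)$ directly via $I(P,L)$ for $L$ the set of lines determined by pairs of points of $P$, but the dyadic/rich-lines formulation is cleaner and matches the style already established in the paper.
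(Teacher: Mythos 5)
Your proposal is correct and follows essentially the same route as the paper: the paper's proof likewise decomposes the lines into dyadic richness classes $L_k$ with $2^k\le |l\cap (A\times A)|<2^{k+1}$, applies Corollary \ref{diemgiau} to each class, and uses the cap $|l\cap (A\times A)|\le |A|$ so that the summed contribution $\sum_k |A|^2 2^{2k}$ is dominated by the top scale, yielding $T(A)\lesssim |A|^4$ with the logarithmic losses absorbed into $\lesssim$. The only cosmetic difference is that you make explicit the reduction from $\binom{i(l)}{3}$ to $i(l)^3$ and the restriction to lines with at least two points, which the paper leaves implicit.
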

\begin{proof}
Let $L_k$ be the set of lines $l$ such that $2^k\le |l\cap (A\times A)|<2^{k+1}$. Since $|l\cap (A\times A)|\le |A|$ for any $l$, we have $k\ll \log (|A|)$. Thus, using Corollary \ref{diemgiau}, we have
\begin{align*}
T(A)&=\sum_{k} \sum_{l\in L_k} |l\cap (A\times A)|^3\\
&=\sum_{k} \left( \frac{|A|^4}{2^{3k}}+\frac{|A|^2}{2^{k}}\right)\cdot 2^{3k+3}\\
&\lesssim |A|^4+\sum_{k}|A|^22^{2k}\lesssim |A|^4,
\end{align*}
where we have used the fact that $2^k\le |A|$. 
\end{proof}
\begin{lemma}\label{thmx-x9}
Let $A$ be a set in $\mathbb{C}$ with $|A|\ge 2$. The number of tuples $(a, b, c, a', b', c')\in A^6$ such that 
\[a(b-c)=a'(b'-c')\]
is bounded by $E^\times (A)^{1/2}|A|^3+|A|^4\le 2E^\times (A)^{1/2}|A|^3$. 
\end{lemma}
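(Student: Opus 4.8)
The idea is to reduce $N=\#\{(a,b,c,a',b',c')\in A^6:\,a(b-c)=a'(b'-c')\}$ to a \emph{multiplicative} energy of $A$ paired against the difference set $A-A$, and then to estimate the multiplicative energy of $A-A$ with the incidence machinery already set up in this section. First I would peel off the degenerate solutions. If $b=c$ then $a(b-c)=0$, which forces $a'(b'-c')=0$, i.e. $b'=c'$ or $a'=0$; in each such case the number of tuples is at most $|A|^4$, and the symmetric situations $b'=c'$, $a=0$, $a'=0$ are identical. So up to an additive $O(|A|^4)$ it is enough to bound the main term $N_0$ in which $a,a'\neq 0$, $b\neq c$ and $b'\neq c'$.

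\textbf{Reduction to a multiplicative energy.} Write $r(\lambda)=\#\{(a,b,c)\in A^3:\,a(b-c)=\lambda\}$ for $\lambda\neq 0$, so that $N_0=\sum_\lambda r(\lambda)^2$. Since $a(b-c)=\lambda$ is equivalent to $b-c=\lambda/a$, we have $r(\lambda)=\sum_{a\in A} r_{A-A}(\lambda/a)$, where $r_{A-A}(\delta)=\#\{(b,c)\in A^2:\,b-c=\delta\}$; that is, $r$ is the multiplicative convolution of $\mathbf 1_A$ with the weight $r_{A-A}$ on $A-A$. Applying the energy form of Cauchy--Schwarz, $\|f\ast g\|_2^2\le \|f\ast f\|_2\,\|g\ast g\|_2$ (in the multiplicative group of nonzero elements), with $f=\mathbf 1_A$ and $g=r_{A-A}$, yields
\[
N_0=\sum_\lambda r(\lambda)^2\;\le\; E^\times(A)^{1/2}\,\Phi^{1/2},
\]
where $\Phi$ is the weighted multiplicative energy of $A-A$. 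Unwinding the weights, $\Phi$ is exactly the number of $8$-tuples in $A^8$ satisfying
\[
(b-c)(\beta'-\gamma')=(b'-c')(\beta-\gamma).
\]
Thus the lemma reduces to the bound $\Phi\le |A|^6$: granting this, $N_0\le E^\times(A)^{1/2}|A|^3$, and combining with the degenerate count gives $N\le E^\times(A)^{1/2}|A|^3+|A|^4$, whence the stated $\le 2E^\times(A)^{1/2}|A|^3$ (using $E^\times(A)\ge |A|^2$).

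\textbf{The hard part: estimating $\Phi$.} This is where I expect the real work to be, and it is precisely what the incidence results of this section are for. The key reinterpretation is geometric: for points $u=(c',c)$ and $v=(b',b)$ of $A\times A$, the number $(b-c)/(b'-c')$ is the slope of the segment $uv$. Hence $\Phi$ counts ordered pairs of \emph{parallel} segments spanned by $A\times A$, i.e. $\Phi=\sum_s \pi(s)^2$ where $\pi(s)$ is the number of point-pairs of $A\times A$ of slope $s$ and $\sum_s \pi(s)\sim |A|^4$. Controlling $\sum_s \pi(s)^2$ is exactly the rich-direction/collinearity count governed by the Szemer\'edi--Trotter bound over $\mathbb{C}$ (Theorem \ref{incidence-C}) together with the rich-line estimate (Corollary \ref{diemgiau}); I would run a dyadic decomposition of the slopes $s$ by the size of $\pi(s)$ and sum, exactly parallel to the proof that $T(A)\lesssim |A|^4$ in Corollary \ref{cococo} and to the dyadic incidence bookkeeping used for Theorem \ref{thm3}. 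I expect this step, rather than the Cauchy--Schwarz reduction, to be the main obstacle. One caveat worth flagging: since $\Phi$ is genuinely of order $|A|^6\log|A|$ for arithmetic progressions (the diagonal slope alone contributes $E^+(A)^2$), the clean inequality in the statement should be understood with the implicit polylogarithmic factors that the $\lesssim$-incidence machinery necessarily carries.
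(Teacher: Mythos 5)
Your reduction half is exactly the paper's argument: the same peeling of the degenerate $b=c$, $b'=c'$ cases at cost $O(|A|^4)$, and your convolution Cauchy--Schwarz $N_0\le E^\times(A)^{1/2}\Phi^{1/2}$ is, after unwinding, precisely the paper's inequality
\[
M\le E^\times(A)^{1/2}\left\vert\left\lbrace (b_1,c_1,\dots,b_4,c_4)\in A^8\colon \tfrac{b_1-c_1}{b_2-c_2}=\tfrac{b_3-c_3}{b_4-c_4}\right\rbrace\right\vert^{1/2}.
\]
The gap is in the step you yourself flag as the hard part, and it is a genuine one. The paper does not bound $\Phi=\sum_s \pi(s)^2$ by a dyadic decomposition over slopes; it applies a \emph{second} Cauchy--Schwarz, pivoting on a common endpoint: writing $h(c,s)$ for the number of points of $A\times A$ joined to the point $c\in A\times A$ by a segment of slope $s$, one has $\pi(s)=\sum_{c}h(c,s)$, hence $\Phi=\sum_s\bigl(\sum_c h(c,s)\bigr)^2\le |A|^2\sum_{c,s}h(c,s)^2$, and $\sum_{c,s}h(c,s)^2$ counts ordered triples $(c,b,d)$ with $b,d$ on a common line through $c$, i.e.\ is at most $T(A)$. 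Corollary \ref{cococo} then gives $\Phi\lesssim |A|^2\cdot |A|^4$. This pivot step is what converts pairs of \emph{parallel} segments (which may lie on different lines) into \emph{collinear} triples, where the incidence machinery applies.

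Your stated plan --- dyadically group slopes by the size of $\pi(s)$ and invoke the rich-line estimate, ``exactly parallel to Corollary \ref{cococo}'' --- does not assemble as written, because Corollary \ref{diemgiau} controls individual rich \emph{lines}, while $\pi(s)$ aggregates an entire parallel pencil: with $N=|A|^2$ points, a slope can have $\pi(s)\sim Nt$ while every line in its pencil carries only $t$ points, so no line is rich and the rich-line bound gives no cap on the number of such slopes. If you run the naive version (a slope with $\pi(s)\ge k$ contains a line with $\ge k/N$ points, then count $k/N$-rich lines), the dyadic sum produces a bound of order $N^5$, far above the required $N^3=|A|^6$. The sketch can be repaired without the pivot, but it needs an extra structural input you did not state: decompose each pencil by line richness $t$, apply Cauchy--Schwarz across the $O(\log|A|)$ dyadic levels, and bound the number of $t$-rich lines in a \emph{fixed} direction by the disjointness estimate $N/t$; combined with Corollary \ref{diemgiau} this yields $t^4\sum_s L_{s,t}^2\lesssim N^3$ per level and hence $\Phi\lesssim |A|^6$. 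Either that disjointness input or the paper's second Cauchy--Schwarz is the missing idea, so as it stands the proposal is incomplete at its crux. One point in your favor: the polylogarithmic caveat is correct, and in fact applies to the paper itself --- its proof passes through $T(A)\lesssim |A|^4$, so the lemma's clean bound should indeed be read with hidden logarithmic factors (which is all that the application to Theorem \ref{thm5} requires).
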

\begin{proof}
Since $|A|\ge 2$, without loss of generality, we assume that $0\not\in A$. We first have an observation that the number of desired tuples with $b=c$ or $b'=c'$ is at most $|A|^4\le E^\times (A)^{1/2}|A|^3$ since $E^\times (A)\ge |A|^2$. 

Let $M$ be the number of tuples with $b\ne c$ and $b'\ne c'$. We have $M$ is equal to the number of desired tuples $(a, b, c, a', b', c')\in A^6$ such that 
\[\frac{a}{a'}=\frac{b'-c'}{b-c}.\]
Using the Cauchy-Schwarz inequality, we have 
\[M\le E^\times (A)^{1/2}\cdot \left\vert\left\lbrace (b_1, c_1, b_2, c_2, b_3, c_3, b_4, c_4)\in A^8\colon \frac{b_1-c_1}{b_2-c_2}=\frac{b_3-c_3}{b_4-c_4}\right\rbrace \right\vert^{1/2}.\]
Using the Cauchy-Schwarz inequality one more time, we have 
\begin{align*}
&\left\vert\left\lbrace (b_1, c_1, b_2, c_2, b_3, c_3, b_4, c_4)\in A^8\colon \frac{b_1-c_1}{b_2-c_2}=\frac{b_3-c_3}{b_4-c_4}\right\rbrace \right\vert\le |A|^2 \cdot \\ &\times \left\vert\left\lbrace (b_1, c_1, b_2, c_2, d_1, d_2)\in A^6\colon \frac{b_1-c_1}{b_2-c_2}=\frac{d_1-c_1}{d_2-c_2}\right\rbrace \right\vert \\
&\le |A|^2\cdot T(A)\lesssim |A|^6,
\end{align*}
where we have used the Corollary \ref{cococo} in the last inequality.
\end{proof}
\paragraph{Proof of Theorem \ref{thm5}:}
Without loss of generality, we assume that $0\not\in A$. It has been proved in \cite{iliya} that there exist $B, C\subset A$ such that $|B|, |C|\ge |A|/3$ and 
\[E^+(B)\cdot E^\times (C)\lesssim |A|^{11/2}.\]
This implies that $E^+(B)\lesssim |A|^{11/4}$ or $E^\times (C)\lesssim |A|^{11/4}$. If $E^+(B)\lesssim |A|^{11/4}$ then we replace the set $A$ in the Theorem \ref{thm5} by $B$, otherwise, we replace the set $A$ by $C$. Thus, we may assume that either $E^+(A)\lesssim |A|^{11/4}$ or $E^\times (A)\lesssim |A|^{11/4}$

The rest of proof of Theorem \ref{thm5} is almost identical with that of Theorem \ref{thm3}, and the last step is to estimate $X$. 

Using Theorem \ref{incidence-C} and the same argument as in the proof of Theorem \ref{thm3}, we have \[X\lesssim |A|^3E^+(A)^{1/2}.\]

On the other hand, using Lemma \ref{thmx-x9}, we have 
\[X\lesssim |A|^3E^\times (A)^{1/2}.\]

Since either $E^+(A)\lesssim |A|^{11/4}$ or $E^\times (A)\lesssim |A|^{11/4}$, we have 
\[X\lesssim |A|^{3+\frac{11}{8}}.\]
Therefore, 
\[|[A, A, 0][A, A, 0]|\gg |A|^{5-\frac{11}{8}}=|A|^{\frac{29}{8}}=|[A, A, 0]|^{\frac{29}{16}}.\]
This completes the proof of the theorem. $\hfill\square$

\end{document}